\newtheorem{teorema}{Theorem}[section]
\newtheorem{prop}[teorema]{Proposition}
\newtheorem{coro}[teorema]{Corollary}
\newtheorem{lema}[teorema]{Lemma}
\theoremstyle{remark}
\newtheorem*{remark}{Remark}
\def\R{\mathbb R}
\begin{document}

\title{A note on Hurwitz's inequality}

\author{Juli\`a Cuf\'{\i}}
\author{Eduardo Gallego$^{*}$}
\author{Agust\'{\i} Revent\'os}

\address{Departament de Matem\`atiques \\Universitat Aut\`{o}noma de Barcelona\\ 08193 Bella\-terra, Barcelona\\Catalonia\\}

\email{jcufi@mat.uab.cat}
\email{egallego@mat.uab.cat}
\email{agusti@mat.uab.cat}

\thanks{The authors where  partially supported by grants 2014SGR289 (Generalitat de Catalunya) and MTM2015-66165-P (FEDER/Mineco).}

\thanks{$^{*}$Corresponding author}

\keywords{Convex set, isoperimetric inequality, evolute, hypocycloid, pedal curve, visual angle} 
\subjclass[2010]{52A10, 53A04}

\begin{abstract}
Given a simple closed plane curve $\Gamma$ of length $L$ enclosing a compact convex set $K$  of area $F$, Hurwitz found an upper bound for the isoperimetric deficit, namely $L^2-4\pi F\leq  \pi |F_{e}|$, where $F_{e}$ is the algebraic area enclosed by the evolute of $\Gamma$. 

In this note we improve this inequality finding strictly positive lower bounds for the deficit $\pi|F_{e}|-\Delta$, where $\Delta=L^{2}-4\pi F$. These bounds involve wether the visual angle of $\Gamma$ or the pedal curve associated to $K$ with respect to the Steiner point of $K$ or the $\mathcal{L}^{2}$ distance between  $K$ and  the Steiner disk of $K$.

For each established inequality we study when equality holds. This occurs for those compact convex sets being bounded by a curve  parallel
to an hypocycloid of $3$, $4$ or $5$ cusps or the Minkowski sum  of this kind of  sets.
\end{abstract}

\maketitle

\section{Introduction}
Let $\Gamma$ be a simple closed plane curve of length $L$ enclosing a region of area $F$. The classical isoperimetric inequality states that
\begin{equation*}\label{iso1}
L^2-4\pi F\geq 0,
\end{equation*}
with equality attained only for a circle.

In the case  that $\Gamma$ bounds a convex set $K$, Hurwitz (\cite{Hurwitz}) established a kind of reverse isoperimetric inequality, namely 
\begin{equation}\label{hwfe}
L^2-4\pi F\leq  \pi |F_{e}|,
\end{equation} 
where $F_{e}$ is the algebraic area ($F_{e}\leq 0$) enclosed by the evolute of $\Gamma$. We recall that the evolute of a curve is the envelope of its normal lines. Moreover equality holds in \eqref{hwfe} if and only if $\Gamma$ is a circle or a curve parallel to an astroid.

The goal of this note is to improve Hurwitz's inequality \eqref{hwfe} finding strictly  positive lower bounds for the \emph{Hurwitz deficit} $\pi|F_{e}|-\Delta$, where $\Delta=L^{2}-4\pi F$. These bounds involve wether the visual angle of $\Gamma$ or the pedal curve associated to $K$ with respect to the Steiner point of $K$ or the $\mathcal{L}^{2}$ distance between the support function of $K$ and the support function of the Steiner disk of $K$.

Hurwitz's inequality \eqref{hwfe} can be improved without introducing new quantities for some special compact sets. For instance, if $K$ has constant width one gets
$$
L^2-4\pi F\leq \frac49 \pi |F_{e}|,
$$
as shown in Theorem \ref{bvb}.

For the general case we prove in Theorem \ref{teo51} the inequality 
$$
\pi |F_{e}|-\Delta\geq  \frac{5}{4}L^{2}+5\int_{P\notin K} (\omega-\sin\omega-\frac{2}{3}\sin^{3}\omega)\,dP,
$$
where $\omega$ is the visual angle of $\Gamma$ from $P$, that is the angle between the tangents from $P$ to $\Gamma$, and $dP$ the area measure. For the case of constant width Theorem~\ref{teo6.4} asserts that
$$
\frac{4}{9}\pi|F_{e}|-\Delta\!\ge\! \frac{64}{9} \int_{P\notin K}\!\left(\omega \!-\!2\sin\omega+\sin 2\omega-\frac{1}{4} \sin 4\omega \!-\!\sin^{3}
\omega\right)\,dP.
$$
In both cases the quantities in the right hand side are strictly positive except when the left hand side vanishes. 

In terms of the area $A$ of the pedal curve associated to the compact strictly convex set $K$, with respect to its  Steiner point,  we prove in Theorem \ref{teo7.1}
$$
\pi |F_{e}| -\Delta \ge \frac{40}{9} \left(\pi (A-F)+\frac{2}{3} L^{2}-\frac{8}{9} \int_{P\notin K}\sin^{3}\omega\,dP\right).
$$
When $K$ has constant width we obtain  (Corollary \ref{coroaf})
$$
\pi|F_{e}|-\Delta\ge \frac{40}{9} \pi (A-F).
$$
In both cases the lower bounds for the positive Hurwitz deficit are strictly positive.
\smallskip		

For each established inequality we study when equality holds. This occurs for those compact convex sets being bounded by a curve  parallel
to an hypocycloid of $3$, $4$ or $5$ cusps or the Minkowski sum  of this kind of  sets.
\smallskip		


\section{Preliminaries} 
\subsection{Convex sets and support function}\label{cvxsets}
A set $K\subset \R^{2}$ is \emph{convex} if it contains the complete segment joining every two points in the set. We shall consider nonempty  compact convex sets. The \emph{support function} of $K$ is defined as
$$
p_{K}(u):=\mathrm{sup}\{\langle x,u\rangle\, :\, x\in K\}\qquad \mathrm{for}\quad u\in \R^{2}.
$$
For a unit vector $u\in S^{1}$ the number $p_{K}(u)$ is the signed distance of the support line to $K$ with outer normal vector $u$ from the origin. The distance is negative if and only if $u$ points into the open half-plane containing the origin (cf.~\cite{Schneider2013}). 
We shall denote by $p(\varphi)$ 
the $2\pi$-periodic function obtained by evaluating $p_{K}(u)$   on $u=(\cos\varphi,\sin\varphi)$. Note that $\partial K$ is the envelope of the one parametric family of lines given by
$$
x\cos\varphi\ +y \sin\varphi\ =p(\varphi).
$$
If the support function $p(\varphi)$ is differentiable  we can parametrize the boundary $\partial K$ by
\begin{equation}\label{paramfi}
\gamma(\varphi)=p(\varphi)N(\varphi)+p'(\varphi)N'(\varphi)
\end{equation}
where $N(\varphi)=(\cos\varphi,\sin\varphi).$
When $p$ is a $\mathcal{C}^{2}$ function the radius of curvature $\rho(\varphi)$ of $\partial K$ at the point $\gamma(\varphi)$ is given by $p(\varphi)+p''(\varphi)$. Then, convexity is equivalent to $p(\varphi)+p''(\varphi)\geq 0$. We say that a $\mathcal{C}^{2}$ support function $p$ defines a \emph{strictly convex} set if $p(\varphi)+p''(\varphi)> 0$ for every value of $\varphi$. 

It can be seen that the length $L$ of $\partial K$ is given by
$$L=\int_{0}^{2\pi}p\,d\varphi.$$
A straightforward computation shows that the area $F$ of $K$ is given by
$$F=\frac{1}{2}\int_{0}^{2\pi}p(p+p'')\,d\varphi.$$
Since $p$ is $2\pi$-periodic, integrating by parts, we get
\begin{equation}\label{genis}F=\frac{1}{2}\int_{0}^{2\pi}(p^2-p'^2)\,d\varphi.\end{equation}

In general, a one parameter family of lines
$$
x \cos t +y  \sin t=f(t),
$$
where $f$ is a differentiable function, defines a  curve in the plane. In this setting the curve is not necessarily closed nor convex. When a curve $\gamma(t)$, $a\leq t\leq b$, is defined as the envelope of a family of lines of this type, for a function $f$ of  class~$\mathcal{C}^{2}$,
we say that $f(t)$ is the \emph{generalized support function} of the curve.  The \emph{area with multiplicities} swept by the radius vector of the curve is given by
\begin{equation}\label{areag}
F=\frac{1}{2}\int_{a}^{b}f(f+f'')\,dt,
\end{equation}
as a simple computation shows.

\enlargethispage{3mm}

\medskip	

Let $p(\varphi)$ be the support function of a strictly convex set $K$. Then $p_{r}(\varphi)=p(\varphi)+r$ defines for each real $r$ a parallel  curve to $\partial K$. If the origin is in the interior of $K$ then $p$ is a strictly positive 
function. If $r>0$ the function $p_{r}$ corresponds to the outer parallel set at distance $r$. When $r<0$ the curve given by $p_{r}$ is not necessarily convex (this is the case  when $|r|>\mathrm{min}(\rho)$, $\rho$ being the radius of curvature).

The Steiner formula (see for instance \cite{Schneider2013})
$$
F_{r}=\pi r^{2}+L\, r+F
$$
gives the area $F_{r}$ of the $r$-parallel  set to $K$.
The discriminant of this polynomial is the isoperimetric deficit $L^{2}-4\pi F$. It is always strictly positive except for a circle. Thus, for every convex set $K$ there are interior parallel sets with negative area. The minimum area value is $F-L^{2}/4\pi$ and it is attained for the parallel set at distance~$-L/2\pi$. Then
\begin{equation}\label{areaxi}
L^{2}-4\pi F=-4\pi F_{-L/2\pi}=4\pi |F_{-L/2\pi}|.
\end{equation}

A special type of convex sets are those  of \emph{constant width}, that is those convex sets whose orthogonal projection on any direction have the same length $w$.
In terms  of the support function $p$ of $K$,  constant width means that $p(\varphi)+p(\varphi+\pi)=w$. Expanding $p$ in Fourier series
\begin{equation}\label{fourier}
p(\varphi)=a_{0}+\sum_{n=1}^{\infty}a_{n}\cos(n\varphi)+b_{n}\sin(n\varphi),
\end{equation}
it follows that
$$p(\varphi)+p(\varphi+\pi)=2\sum_{n=0}^{\infty}(a_{2n}\cos 2n\varphi+b_{2n}\sin 2n\varphi),$$ 
so constant width is equivalent to   $a_{n}=b_{n}=0$ for all even $n>0$.

\subsection{Hypocycloids}\label{cicloid}
Consider a curve defined by the generalized support function  
$$
p(\theta)=A\sin(B\theta),  \quad \theta\in\R
$$ 
with $B$ a positive rational number and $A>0$. 
If we define $k=2B/(B-1)$ and  $A=r(k-2)$, then  $p(\theta)$ can be written in the more convenient form
$$
p(\theta)=r(k-2)\sin\left(\frac{k}{k-2}\theta\right),\quad k>2.
$$
The envelope curve given by this generalized support function can be parametrized by  
$$
\gamma(\theta)=r(k-2)\sin\left(\frac{k}{k-2}\theta\right)N(\theta)+rk\cos\left(\frac{k}{k-2}\theta\right)N'(\theta).
$$
Putting $\theta=(k-2)t/2$ the curve $\tilde\gamma(t)=\gamma(\frac{k-2}{2}t)$ has components
$$
\left.
\begin{array}{r@{\;}c@{\;}l@{}}
x(t) & = & r(k-2)\sin\left(\frac{k}{2}t\right)\cos\left(\frac{k-2}{2}t\right)-rk\cos\left(\frac{k}{2}t\right)\sin\left(\frac{k-2}{2}t\right) \\*[10pt]
y(t) & = & r(k-2)\sin\left(\frac{k}{2}t\right)\sin\left(\frac{k-2}{2}t\right)+rk\cos\left(\frac{k}{2}t\right)\cos\left(\frac{k-2}{2}t\right)
\end{array}
\right\}.
$$
Using known trigonometric identities we get
$$
\left.
\begin{array}{r@{\;}c@{\;}l@{}}
x(t) & = & r(k-1)\sin(t)-r\sin((k-1)t) \\ 
y(t) & = & r(k-1)\cos(t)+r\cos((k-1)t)
\end{array}
\right\}.
$$

This is just the parametrization of an \emph{hypocycloid} obtained by rolling a circle of radius $r$ inside a circle of radius $R=kr$.
\smallskip	

Writing $k=m/n$ with $m,n$ coprime numbers, in order to obtain a closed hypocycloid  the parameter $t$ has to vary in the interval $[0,2n\pi]$ and the parameter $\theta$ has to vary in the interval  $[0,(m-2n)\pi]$. Note that for a generalized support function~$\sin(B\theta)$ with $B$ an integer greater or equal than two, the hypocycloid is traveled twice if $B$ is odd and once if $B$ is even.
\smallskip	

When $k$ is an integer the curve has $k$ \emph{cusps} (extremal points of the curvature). For $k=m/n$ with $m$, $n$~coprime numbers the curve has $m$ cusps. In the special case $k=3$ the hypocycloid  is called a \emph{deltoid} or \emph{Steiner curve}; for $k=4$ it is called an \emph{astroid}.
\begin{figure}[h] 
   \centering
   \includegraphics[width=\textwidth]{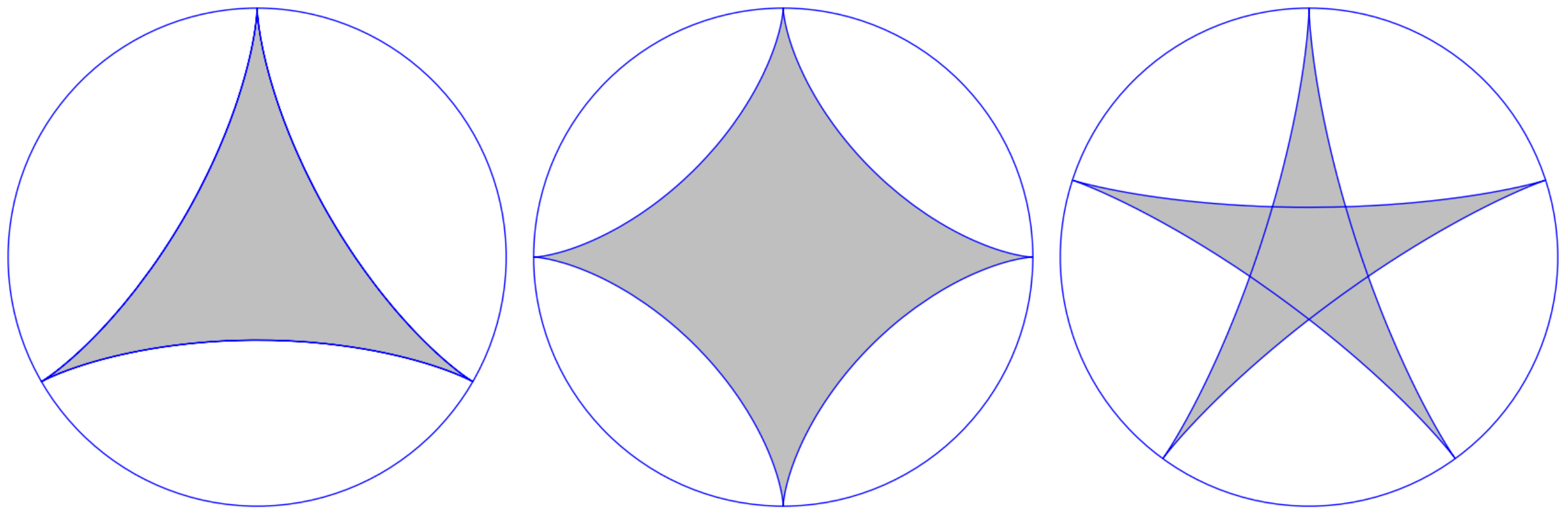} 
   \caption{Hypocycloids with $k=3$, $4$ and $5/2$.}
   \label{fig:cicloides}
\end{figure}

\subsection{Steiner point and pedal curve}
Given a compact convex set $K$ with support function $p(\varphi)$ 
the \emph{Steiner point} of $K$ is defined by the vector-valued integral
$$
s(K):=\frac{1}{\pi}\int_{0}^{2\pi}p(\varphi)N(\varphi)\,d\varphi.
$$
This functional on the space of convex sets is additive with respect to the Minkowski sum. The Steiner point is rigid motion equivariant; this means that $s(gK)=gs(K)$ for every rigid motion $g$. We remark that $s(K)$ can be considered, in the ${\mathcal C}^2$ case, as the centroid with respect to the curvature measure in the boundary $\partial K$; also we have that $s(K)$ lies in the interior of $K$ (see \cite{groemer}). 
In terms of the Fourier coefficients of $p(\varphi)$ given in \eqref{fourier} 
the Steiner point is $$s(K)=(a_{1},b_{1}).$$

The relation between the support function $p(\varphi)$ of a convex set $K$ and the support function $q(\varphi)$ of the same convex set  but with respect to a new reference with origin at the point $(a,b)$, and axes parallel to the previous 
$x$ and $y$-axes,  is given by 
$$q(\varphi)=p(\varphi)-a\cos\varphi-b\sin\varphi.$$ 

Hence, taking the Steiner point as a new origin, we have
$$q(\varphi)=a_{0}+\sum_{n\geq 2}a_{n}\cos n\varphi+b_{n}\sin n\varphi.$$

We recall that the \emph{Steiner disk} of $K$ is the disk whose center is the Steiner point and whose diameter is the mean width of $K$.
\medskip	

The associated \emph{pedal curve} to $K$ is the curve that in polar coordinates with respect to the origin is given by $r=p(\varphi)$. Notice that this curve depends on the center point from which the support function is considered. In fact it is the geometrical locus of the orthogonal projection of the center on the tangents to the curve. The area enclosed by the pedal curve is
$$
A= \frac12 \int_{0}^{2\pi}p(\varphi)^{2}\,d\varphi.
$$

\section{Hurwitz's inequality}
For a ${\mathcal C}^{1}$ function  $q(\varphi)$ of period $2\pi$, let us introduce the \emph{Wirtinger deficit} $W_{q}$ of~$q$ by
$$W_{q}=\int_{0}^{2\pi}(q'^{2}-q^{2})\,d\varphi.$$
Note that by \eqref{areag}, $W_{q}=-2F$ where $F$ is the area with multiplicities enclosed by the curve defined by the generalized support function $q$.

Recall that Wirtinger's inequality (see \cite{inequalities}) states that if 
$$
\int_{0}^{2\pi}q(\varphi)\,d\varphi=0,
$$
then
$$
W_{q}\geq 0.
$$
In particular we always have $W_{q'}\geq 0. $
\smallskip	

Now we  give a relationship between the Wirtinger deficit and  Hurwitz's deficit.

\begin{prop}\label{prop31} 
Let $K$ be a compact
strictly convex set of area $F$ bounded by a curve $\Gamma=\partial K$ of class ${\mathcal C}^{2}$ and length $L$. Let $p$ be the support function of $K$ and let $F_{e}$ be the area with multiplicities enclosed by the evolute of $\Gamma$. Then
$$\pi |F_{e}|-\Delta=\frac{\pi}{2}(W_{q'}-4W_{q})$$
where $q(\varphi)=p(\varphi)-L/2\pi$ and $\Delta=L^{2}-4\pi  F$.
\end{prop}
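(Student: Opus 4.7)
The plan is to express both $\pi|F_e|$ and $\Delta$ in terms of the Wirtinger deficits $W_q$ and $W_{q'}$, and then simply combine them.

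First I would parametrize the evolute. Starting from the boundary parametrization \eqref{paramfi} and using that $N''=-N$, a short computation gives $\gamma'(\varphi)=(p+p'')N'(\varphi)$, so the inward normal is $-N(\varphi)$ and the radius of curvature is $\rho=p+p''$. Therefore the evolute is
\[
\gamma_e(\varphi)=\gamma(\varphi)-(p+p'')N(\varphi)=-p''(\varphi)N(\varphi)+p'(\varphi)N'(\varphi).
\]
Writing $\gamma_e=(x,y)$ and computing $x\,dy-y\,dx$ one finds, after the trigonometric identities $\cos^2+\sin^2=1$ cancel the cross terms, that $x\,dy-y\,dx=(p'p'''+p'^2)\,d\varphi$. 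Applying the area formula
$F_e=\tfrac12\oint(x\,dy-y\,dx)$ and integrating by parts, using $2\pi$-periodicity, yields
\[
F_e=\tfrac{1}{2}\int_{0}^{2\pi}(p'^{2}-p''^{2})\,d\varphi.
\]
Since $p'$ has mean zero, Wirtinger's inequality applied to $p'$ gives $F_e\le 0$, and because $q'=p'$ we conclude
\[
|F_e|=-F_e=\tfrac{1}{2}\int_{0}^{2\pi}(p''^{2}-p'^{2})\,d\varphi=\tfrac{1}{2}W_{q'}.
\]
(Alternatively, one can obtain the same expression directly from the general formula \eqref{areag} once one identifies $-p''$ as the generalized support function of the evolute, since the normal to the evolute is rotated by $\pi/2$.)

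Next I would handle $\Delta$. Since $q=p-L/2\pi$ has mean zero, using $L=\int_{0}^{2\pi}p\,d\varphi$ one gets
\[
\int_{0}^{2\pi}p^{2}\,d\varphi=\int_{0}^{2\pi}q^{2}\,d\varphi+\frac{L^{2}}{2\pi},\qquad
\int_{0}^{2\pi}p'^{2}\,d\varphi=\int_{0}^{2\pi}q'^{2}\,d\varphi,
\]
and then \eqref{genis} gives
\[
2F=\int_{0}^{2\pi}(p^{2}-p'^{2})\,d\varphi=\frac{L^{2}}{2\pi}-W_{q}.
\]
Hence $\Delta=L^{2}-4\pi F=2\pi W_{q}$.

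Putting the two computations together,
\[
\pi|F_e|-\Delta=\frac{\pi}{2}W_{q'}-2\pi W_{q}=\frac{\pi}{2}\bigl(W_{q'}-4W_{q}\bigr),
\]
which is the claimed identity. The main subtlety is just the bookkeeping for the evolute's area: one must correctly identify the center of curvature, integrate by parts to eliminate the $p'''$ term, and keep track of the sign so that $|F_e|$ (and not $-|F_e|$) is what appears on the left-hand side; once this is done the rest reduces to the standard shift to zero-mean and the definition of $W_q$.
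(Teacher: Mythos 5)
Your proof is correct and follows essentially the same route as the paper: both reduce the identity to $|F_e|=\tfrac12 W_{q'}$ and $\Delta=2\pi W_q$, the paper obtaining the first from the generalized support function $p'(\varphi-\pi/2)$ of the evolute and the second from the Steiner formula for the interior parallel set at distance $-L/2\pi$, while you verify both by direct computation (Green's area formula plus integration by parts, and the mean-zero shift in \eqref{genis}). One caveat: your parenthetical alternative is wrong as stated --- the generalized support function of the evolute is $p'(\varphi-\pi/2)$, not $-p''(\varphi)$, since from $\gamma_e=p'(\varphi)N(\varphi+\tfrac{\pi}{2})+p''(\varphi)N'(\varphi+\tfrac{\pi}{2})$ the support function in the rotated frame is $p'$, and feeding $-p''$ into \eqref{areag} would produce $\tfrac12\int(p''^2-p'''^2)\,d\varphi$ rather than the correct $\tfrac12\int(p'^2-p''^2)\,d\varphi$. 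As that remark plays no role in your main argument, the proof stands.
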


\begin{proof}
First of all we claim that the generalized support function for the evolute of $\Gamma$ is 
$p'(\varphi-\pi/2)$.
In fact, if the curve $\Gamma$ is parametrized by $\gamma(\varphi)$ as in \eqref{paramfi}, its evolute  can be parametrized by
\begin{equation*}
\begin{split}
\tilde\gamma(\varphi)&=\gamma(\varphi)-(p(\varphi)+p''(\varphi))N(\varphi)=p'(\varphi)N'(\varphi)-p''(\varphi)N(\varphi)\\*[5pt]
&= p'(\varphi)N\left(\varphi+\frac{\pi}{2}\right)+p''(\varphi)N'\left(\varphi+\frac{\pi}{2}\right),
\end{split}
\end{equation*}
and this proves the claim. 
So  $W_{q'}=-2F_{e}$ and since  $W_{q'}\geq 0$ we get $F_{e}\leq 0$. 

Now by \eqref{areaxi} we have that $\Delta=4\pi|F_{-L/2\pi}|$ and by \eqref{areag} we know that $W_{q}=-2F_{-L/2\pi}.$ Therefore
$$
\pi|F_{e}|-\Delta=\pi(|F_{e}|-4|F_{-L/2\pi}|)=\frac{\pi}{2}(W_{q'}-4W_{q}).
$$
%
%
\end{proof} 

\begin{remark}
Let $F$ be the area  enclosed by the curve with generalized support function the $2\pi$-periodic function $q$. As well be $F_{e}$ the area  enclosed by the evolute of this curve.
The equalities $W_{q'}=-2F_{e}$ and $W_{q}=-2F$  give 
$$\frac{1}{2}(W_{q'}-W_{q})=F-F_{e},$$ 
both areas counted with multiplicities.
Thus, for closed curves with positive curvature, we have
\begin{equation}\label{aab}
F-F_{e}=\frac{1}{2}\int_{0}^{2\pi}(q+q'')^{2}\,d\varphi=\frac{1}{2}\int_{0}^{L}\rho\,ds
\end{equation} 
where $\rho=q+q''$ is the radius of curvature and $L$
the length of the curve. We have used the relation $ds=\rho\,d\varphi$.
Equality \eqref{aab} for the case of simple closed curves that bound a strictly convex domain was proved in \cite{Hurwitz} and \cite{ERev}.\end{remark}
\smallskip	

Next Lemma compares the Wirtinger deficit of a given function with that of its derivative. The proof follows the standard pattern of the proof of Wirtinger inequality using Fourier series.
 
\begin{lema}\label{lema} Let $q=q(\varphi)$ a $2\pi$-periodic ${\mathcal C}^2$ function. Then 
$$W_{q'}\geq 4W_{q}+\frac{2}{\pi}\left(\int_{0}^{2\pi}q\,d\varphi\right)^{2}\geq 0.$$
Moreover the first inequality is an equality if and only if
$$q(\varphi)=a_{0}+a_{1}\cos\varphi+b_{1}\sin\varphi+a_{2}\cos2\varphi+b_{2}\sin2\varphi,$$ for some constants $a_{0}, a_{1},b_{1},a_{2},b_{2}\in\R$.
\end{lema}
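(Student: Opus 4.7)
The plan is to reduce everything to a Parseval computation on the Fourier series of $q$. Write
$$q(\varphi)=a_{0}+\sum_{n=1}^{\infty}(a_{n}\cos n\varphi+b_{n}\sin n\varphi),$$
which, after differentiating term-by-term and using orthogonality of the trigonometric system, gives
$$\int_{0}^{2\pi}q^{2}\,d\varphi=2\pi a_{0}^{2}+\pi\sum_{n\geq 1}(a_{n}^{2}+b_{n}^{2}),\quad \int_{0}^{2\pi}q'^{2}\,d\varphi=\pi\sum_{n\geq 1}n^{2}(a_{n}^{2}+b_{n}^{2}),$$
and similarly $\int_{0}^{2\pi}q''^{2}\,d\varphi=\pi\sum_{n\geq 1}n^{4}(a_{n}^{2}+b_{n}^{2})$. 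Since $q'$ has zero mean, Wirtinger's inequality applied to $q'$ itself justifies $W_{q'}\ge 0$ and the formula $W_{q'}=\int_{0}^{2\pi}(q''^{2}-q'^{2})\,d\varphi$ is available.

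Plugging the Parseval expressions into the definitions of $W_{q}$ and $W_{q'}$, I expect to obtain
$$W_{q}=\pi\sum_{n\geq 1}(n^{2}-1)(a_{n}^{2}+b_{n}^{2})-2\pi a_{0}^{2},\qquad W_{q'}=\pi\sum_{n\geq 1}n^{2}(n^{2}-1)(a_{n}^{2}+b_{n}^{2}).$$
Since $\bigl(\int_{0}^{2\pi}q\,d\varphi\bigr)^{2}=4\pi^{2}a_{0}^{2}$, the $a_{0}$ contributions will exactly cancel and I will be left with the clean identity
$$W_{q'}-4W_{q}-\frac{2}{\pi}\left(\int_{0}^{2\pi}q\,d\varphi\right)^{2}=\pi\sum_{n\geq 1}(n^{2}-1)(n^{2}-4)(a_{n}^{2}+b_{n}^{2}).$$

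From this identity both claims are immediate. The right-hand side is a sum of non-negative terms because $(n^{2}-1)(n^{2}-4)\ge 0$ for every $n\ge 1$ (it vanishes at $n=1,2$ and is strictly positive from $n=3$ on), which gives the first inequality; equality forces $a_{n}=b_{n}=0$ for all $n\ge 3$, i.e.\ $q$ is a trigonometric polynomial of degree at most $2$, which is precisely the stated characterization. For the second inequality, one rewrites $4W_{q}+\frac{2}{\pi}(\int q)^{2}=4\pi\sum_{n\ge 1}(n^{2}-1)(a_{n}^{2}+b_{n}^{2})\ge 0$, since $n^{2}-1\ge 0$ for $n\ge 1$.

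There is no real obstacle beyond bookkeeping: the only mildly delicate point is to verify that the $a_{0}^{2}$ terms coming from $-4W_{q}$ and from $\frac{2}{\pi}(\int q)^{2}$ cancel, so that the inequality has the crisp spectral form above. Everything else is the standard Fourier-series proof of Wirtinger's inequality, applied twice.
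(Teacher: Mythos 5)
Your proof is correct and takes essentially the same route as the paper: expand $q$ in a Fourier series, compute $W_{q}$ and $W_{q'}$ via Parseval, and use that $n^{2}(n^{2}-1)\geq 4(n^{2}-1)$ with equality only for $n\leq 2$. The explicit spectral identity with coefficients $(n^{2}-1)(n^{2}-4)$ that you isolate is exactly the form the paper itself exploits later in the proofs of its main theorems.
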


\begin{proof}
Let $$q(\varphi)=a_{0}+\sum_{n=1}^{\infty}a_{n}\cos n\varphi+b_{n}\sin n\varphi$$
be the Fourier series expansion of $q(\varphi)$. Using the Parseval identity we get 
\begin{equation*}
\begin{split}
W_{q'}&=\pi\sum_{n=1}^{\infty} n^2(n^2-1)(a_{n}^2+b_{n}^2)\\*[5pt]
&\geq 4\pi\sum_{n=1}^{\infty} (n^2-1)(a_{n}^2+b_{n}^2)=4W_{q}+\frac{2}{\pi}\left(\int_{0}^{2\pi}q\,d\varphi\right)^{2}.
\end{split}
\end{equation*}
Equality holds if and only if  $a_{n}=b_{n}=0$, if $n\geq 3$. 
\end{proof} 
Remark that the first inequality in Lemma \ref{lema} improves Wirtinger's  inequality for the derivative of $2\pi$-periodic functions. 
\medskip	

For reader's convenience we provide a simple proof of Hurwitz's inequality based on Proposition \ref{prop31}. 

\begin{teorema}[\bf Hurwitz]\label{teo33}
Let $K$ be a compact strictly convex set of area $F$ bounded by a curve $\Gamma=\partial K$ of class ${\mathcal C}^{2}$ and length $L$ and let $F_{e}$ be the area with multiplicities enclosed by the evolute of  $\Gamma$. 
Then
\begin{equation}\label{aa}L^{2}-4\pi F\leq \pi |F_{e}|.\end{equation}
Equality holds if and only if~ $\Gamma$ is a circle or it is a curve  parallel to an astroid  at distance $L/2\pi$. 
\end{teorema}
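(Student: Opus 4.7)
The plan is to combine Proposition~\ref{prop31} with Lemma~\ref{lema} applied to the function $q(\varphi)=p(\varphi)-L/2\pi$. The inequality will then be essentially a one-line consequence, and the equality analysis will reduce to identifying which trigonometric polynomials of degree at most~$2$ serve as support functions.

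First I would observe that $q$ has mean zero: since $L=\int_{0}^{2\pi}p\,d\varphi$, we have $\int_{0}^{2\pi}q\,d\varphi=L-L=0$. Then Lemma~\ref{lema} applied to $q$ yields $W_{q'}\geq 4W_{q}$, and Proposition~\ref{prop31} gives
$$
\pi|F_{e}|-\Delta=\frac{\pi}{2}(W_{q'}-4W_{q})\geq 0,
$$
which is exactly \eqref{aa}. This handles the inequality.

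For the equality case, I would read off from Lemma~\ref{lema} that $W_{q'}=4W_{q}$ forces $q$ to be a trigonometric polynomial of degree at most~$2$; combined with $a_{0}=0$ (the mean-zero condition already observed), this means
$$
q(\varphi)=a_{1}\cos\varphi+b_{1}\sin\varphi+a_{2}\cos 2\varphi+b_{2}\sin 2\varphi,
$$
so
$$
p(\varphi)=\frac{L}{2\pi}+a_{1}\cos\varphi+b_{1}\sin\varphi+a_{2}\cos 2\varphi+b_{2}\sin 2\varphi.
$$
Using the formula recalled in Section~\ref{cvxsets} for the change of the support function under a translation, the terms $a_{1}\cos\varphi+b_{1}\sin\varphi$ correspond to moving the origin to the Steiner point $(a_{1},b_{1})$; they do not alter the shape of $\Gamma$. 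So, after translating the origin, the support function takes the form
$$
p(\varphi)=\frac{L}{2\pi}+A\sin(2\varphi+\alpha),\qquad A\geq 0.
$$
If $A=0$, then $\Gamma$ is a circle of radius $L/2\pi$. If $A>0$, then by the computation of Section~\ref{cicloid} (case $k=4$), the function $A\sin(2\varphi+\alpha)$ is, up to a rotation, the generalized support function of an astroid, and the constant $L/2\pi$ added to it shows that $\Gamma$ is the parallel curve to this astroid at distance $L/2\pi$. Conversely, in both listed cases the function $q$ is a trigonometric polynomial of degree at most $2$ with zero mean, so equality holds in Lemma~\ref{lema}, and hence in~\eqref{aa}.

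The proof is short because Proposition~\ref{prop31} and Lemma~\ref{lema} do all the heavy lifting; the only point requiring care is the equality analysis, where one must recognize the Fourier signature of a curve parallel to an astroid and verify that the linear Fourier modes are nothing but the translation ambiguity of the support function. Convexity of the resulting set (which implicitly requires $L/2\pi\geq 3A$ via $p+p''\geq 0$) is automatic because $\Gamma$ was assumed to bound a strictly convex set to begin with.
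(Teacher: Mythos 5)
Your proposal is correct and follows essentially the same route as the paper's own proof: deduce the inequality from Proposition~\ref{prop31} together with Lemma~\ref{lema} applied to the mean-zero function $q=p-L/2\pi$, then characterize equality by truncating the Fourier series at degree $2$, absorbing the first harmonics into a translation to the Steiner point, and identifying the remaining second harmonic with the generalized support function of an astroid via Subsection~\ref{cicloid}. The only (harmless) difference is that you make the converse verification and the convexity bound $L/2\pi\geq 3A$ explicit, which the paper states more briefly.
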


\begin{proof}
The inequality follows from Proposition \ref{prop31} and Lemma \ref{lema}. 

Since $q(\varphi)=p(\varphi)-L/2\pi$ it is
$$
\int_{0}^{2\pi}q(\varphi)\,d\varphi=0,
$$
and so equality in \eqref{aa}
is equivalent to equality in the first inequality of Lemma \ref{lema}. This implies  
$$
p(\varphi)=a_{0}+a_{1}\cos\varphi+b_{1}\sin\varphi+a_{2}\cos2\varphi+b_{2}\sin2\varphi.
$$
Taking the Steiner point $(a_{1},b_{1})$ as a new origin of coordinates
the new support function of $K$ becomes
$$
\tilde{p}(\varphi)=a_{0}+a_{2}\cos2\varphi+b_{2}\sin2\varphi.
$$
If $a_{2}=b_{2}=0$ we get a circle. Otherwise we put $u=\varphi-\varphi_{0}+\pi/4$, where 
$$\tan 2\varphi_{0}=\frac{b_{2}}{a_{2}}$$ and in terms of  $u$ the support function of $K$ is 
$$\tilde{p}(u)=a_{0}+a \sin 2u$$
with $a=\sqrt{a_{2}^{2}+b_{2}^{2}}>0$. 
Notice that, since $\tilde p+\tilde p''>0$, one has $a<a_{0}/3=L/6\pi.$ 

From subsection~\ref{cicloid} it follows  that $\Gamma$ is parallel to an astroid at distance $a_{0}=L/2\pi$.
\end{proof}

\section{Lower bounds for Hurwitz's deficit in terms of the visual angle}
We proceed now to find a lower bound for the Hurwitz deficit $\pi|F_{e}|-\Delta$ so improving Theorem \ref{teo33}. 
If
$$p(\varphi)=a_{0}+\sum_{n\geq 1}a_{n}\cos n\varphi+b_{n}\sin n\varphi$$
is the Fourier series  of the support function of a compact  convex set $K$,  it is known that the quantities $c_{n}^{2}=a_{n}^{2}+b_{n}^{2}$, for $n\geq 2$,  are invariants  under the group of plane motions.  
This invariance will be clear through formula \eqref{ss} due to Hurwitz. 

Consider $\omega$  the \emph{visual angle} of $\Gamma$ from $P$, that is the angle between the tangents from $P$ to $\Gamma$, and let $dP$ be the area measure.
Writing 
$$I_{n}=\int_{P\notin K}\left(-2\sin(\omega)+\frac{n+1}{n-1}\sin(n-1)\omega-\frac{n-1}{n+1}\sin(n+1)\omega\right)\,dP, $$
it is proved in \cite{Hurwitz}\footnote{There is a misprint with the sign in Hurwitz's paper. 
Moreover the $c_{n}$ coefficients appearing in \eqref{ss} are different from those in Hurwitz's paper because the latter correspond to the  Fourier series of the curvature radius function.} that
\begin{equation}\label{ss}I_{n}=L^{2}+(-1)^{n}\pi^{2}(n^{2}-1)c_{n}^{2},\end{equation} $L$ being the length of the boundary of $K$.
\medskip		

For instance, if $n=2$ one gets 
\begin{equation}\label{w3}\frac{4}{3}\int_{P\not \in K}\sin^{3}\omega\, dP=L^{2}+3\pi^{2}c_{2}^{2}.\end{equation}
Moreover, this visual angle also verifies
the Crofton formula (see \cite{Hurwitz})
\begin{equation}\label{bb}
\frac{L^{2}}{2}-\pi\, F =\int_{P\not \in K} (\omega-\sin\omega) \,dP,
\end{equation}

We can prove now the following result.
\begin{teorema}\label{teo51}
Let $K$ be a compact
 strictly convex set of area $F$ bounded by a curve $\Gamma=\partial K$ of class ${\mathcal C}^{2}$ and length $L$.
Let $F_{e}$ be the  area with multiplicities enclosed by the evolute of~ $\Gamma$ and  let $\Delta$ be the isoperimetric deficit. Then
\begin{equation}\label{delta1}
\pi |F_{e}|-\Delta\geq  \frac{5}{4}L^{2}+5\int_{P\notin K} (\omega-\sin\omega-\frac{2}{3}\sin^{3}\omega)\,dP.
\end{equation}
The right hand side of this inequality is a strictly positive quantity except when $\pi|F_{e}|-\Delta=0$ in which case it also vanishes.
\end{teorema}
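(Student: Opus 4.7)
The plan is to translate both sides of \eqref{delta1} into Fourier coefficients of the support function and reduce the inequality to a term-by-term comparison. Write
$p(\varphi)=a_{0}+\sum_{n\geq 1}(a_{n}\cos n\varphi+b_{n}\sin n\varphi)$
with $a_{0}=L/(2\pi)$, and set $c_{n}^{2}=a_{n}^{2}+b_{n}^{2}$.

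For the left-hand side, I would combine Proposition~\ref{prop31} with Parseval's identity applied to $q=p-L/(2\pi)$. A direct computation gives $W_{q}=\pi\sum_{n\geq 2}(n^{2}-1)c_{n}^{2}$ and $W_{q'}=\pi\sum_{n\geq 2}n^{2}(n^{2}-1)c_{n}^{2}$, whence
$$
\pi|F_{e}|-\Delta=\frac{\pi}{2}(W_{q'}-4W_{q})=\frac{\pi^{2}}{2}\sum_{n\geq 3}(n^{2}-1)(n^{2}-4)\,c_{n}^{2},
$$
where the $n=2$ summand vanishes on its own. The same Parseval calculation applied directly to the isoperimetric integral also yields the useful identity $\Delta=2\pi^{2}\sum_{n\geq 2}(n^{2}-1)c_{n}^{2}$.

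For the right-hand side, Crofton's formula \eqref{bb} rewrites $5\int_{P\notin K}(\omega-\sin\omega)\,dP$ as $\tfrac{5}{2}L^{2}-5\pi F$, while Hurwitz's identity \eqref{w3} rewrites $\tfrac{10}{3}\int_{P\notin K}\sin^{3}\omega\,dP$ as $\tfrac{5}{2}L^{2}+\tfrac{15\pi^{2}}{2}c_{2}^{2}$. Substituting into the right-hand side of \eqref{delta1}, the two $\tfrac{5}{2}L^{2}$ contributions cancel and one is left with $\tfrac{5}{4}\Delta-\tfrac{15\pi^{2}}{2}c_{2}^{2}$. Since the $n=2$ term of $\tfrac{5}{4}\Delta$ is precisely $\tfrac{15\pi^{2}}{2}c_{2}^{2}$, the right-hand side of \eqref{delta1} collapses to
$$
\frac{5}{4}L^{2}+5\int_{P\notin K}\left(\omega-\sin\omega-\tfrac{2}{3}\sin^{3}\omega\right)dP=\frac{5\pi^{2}}{2}\sum_{n\geq 3}(n^{2}-1)c_{n}^{2}.
$$

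Comparing the two expressions, inequality \eqref{delta1} is equivalent to the coefficient-wise statement
$$
\sum_{n\geq 3}(n^{2}-1)(n^{2}-9)\,c_{n}^{2}\geq 0,
$$
which is immediate: the summand vanishes at $n=3$ and is strictly positive for every $n\geq 4$. For the strict-positivity clause, both the simplified right-hand side and the deficit $\pi|F_{e}|-\Delta$ are series in the coefficients $c_{n}^{2}$ with $n\geq 3$ and strictly positive weights, so they vanish simultaneously, exactly when $c_{n}=0$ for all $n\geq 3$. The only slightly delicate step is the algebraic bookkeeping in the second paragraph---in particular verifying that the $c_{2}^{2}$ contribution produced by \eqref{w3} exactly cancels the $n=2$ term of $\tfrac{5}{4}\Delta$; once that cancellation is confirmed, the rest of the argument is purely elementary.
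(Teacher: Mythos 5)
Your proposal is correct and is essentially the paper's own argument read in the opposite direction: the paper starts from the identity $\pi|F_{e}|-\Delta=\frac{\pi^{2}}{2}\sum_{n\geq 3}(n^{2}-1)(n^{2}-4)c_{n}^{2}$, applies the coefficient bound $(n^{2}-1)(n^{2}-4)\geq 5(n^{2}-1)$ for $n\geq 3$, and then converts $\frac{5\pi^{2}}{2}\sum_{n\geq 3}(n^{2}-1)c_{n}^{2}$ into the integral-geometric expression via \eqref{w3} and \eqref{bb}, whereas you convert the right-hand side into that same Fourier sum first and then compare term by term. The bookkeeping you flag (the cancellation of the $c_{2}^{2}$ term against the $n=2$ term of $\frac{5}{4}\Delta$) checks out, so there is nothing to add.
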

\begin{proof}
As we have seen in the proof of Proposition \ref{prop31}
we have
\begin{equation*}\pi |F_{e}|-\Delta=\frac{\pi}{2}(W_{q'}-4W_{q})=
 \frac{\pi}{2}\left(4\int_{0}^{2\pi}q^2d\varphi-5\int_{0}^{2\pi}q'^2\,d\varphi+\int_{0}^{2\pi}q''^2\,d\varphi\right),
\end{equation*}
where $q(\varphi)=p(\varphi)-L/2\pi$, and $p(\varphi)$
is the support function of $K$ with respect to the Steiner point.

In terms of the Fourier coefficients of $p$ 
\begin{equation}\label{6febrer}
\pi |F_{e}|-\Delta =\frac{\pi^{2}}{2}\sum_{n\geq 3}(n^{4}-5n^{2}+4)c_{n}^{2}.\end{equation}
Observe now that, for $n\geq 3$, we have $n^{4}-5n^{2}+4\geq 5(n^{2}-1)$, with equality only for $n=3$. Therefore
\begin{equation}\label{tt}
\begin{split}
\pi |F_{e}|-\Delta&\geq \frac{5\pi^{2}}{2}\sum_{n\geq 3}(n^{2}-1)c_{n}^{2}=\frac{5\pi^{2}}{2}\left(\sum_{n\geq 2}(n^{2}-1)c_{n}^{2}-3c_{2}^{2}\right)\\*[5pt]
&=\frac{5}{4}L^{2}-5\pi F-\frac{15\pi^{2}}{2}c_{2}^{2}=\frac{15}{4}L^{2}-5\pi F-\frac{10}{3}\int_{P\notin K}\sin^{3}\omega\,dP.
\end{split}
\end{equation}
Using   Crofton's formula \eqref{bb}, the last expression can be written as
$$\frac{5}{4}L^{2}+5\int_{P\not \in K} (\omega-\sin\omega-\frac{2}{3}\sin^{3}\omega)\,dP$$
and the  inequality in the theorem is proved.  
Moreover, the  sum 
$\sum_{n\geq 3}(n^{2}-1)c_{n}^{2}$
in \eqref{tt} vanishes if and only if $c_{n}=0$ for $n\geq 3$ as well as $\pi|F_{e}|-\Delta.$
\end{proof} 
We study now when equality holds in Theorem \ref{teo51}. 

\begin{prop}\label{coro2}
Equality in \eqref{delta1}  holds if and only if for the compact strictly convex set $K$  one of the following assertions holds:
\begin{itemize}
\item[a)] $K$ is a  disk or it is bounded by a curve parallel to an astroid.
\item[b)] $K$ is bounded by a curve parallel to a Steiner curve.
\item[c)] $K$ is the Minkowski sum of compact sets  of the above types.
\end{itemize}
\end{prop}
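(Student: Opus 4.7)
The plan is to trace equality back through the proof of Theorem~\ref{teo51}. Formula~(\ref{6febrer}) reads
\[
\pi|F_{e}|-\Delta=\frac{\pi^{2}}{2}\sum_{n\geq 3}(n^{4}-5n^{2}+4)\,c_{n}^{2},
\]
while the lower bound in~(\ref{tt}) discarded the quantity $(n^{4}-5n^{2}+4)-5(n^{2}-1)=(n^{2}-1)(n^{2}-9)$, which for $n\geq 3$ vanishes only at $n=3$ and is strictly positive for $n\geq 4$. Hence equality in~(\ref{delta1}) is equivalent to $c_{n}=0$ for every $n\geq 4$.

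Taking the Steiner point of $K$ as origin gives $a_{1}=b_{1}=0$, so the support function has the form
\[
p(\varphi)=a_{0}+a_{2}\cos 2\varphi+b_{2}\sin 2\varphi+a_{3}\cos 3\varphi+b_{3}\sin 3\varphi.
\]
I would then distinguish three sub-cases. When $c_{3}=0$ the rotational argument closing the proof of Theorem~\ref{teo33} carries over verbatim, producing either a disk or a support function of the form $a_{0}+c_{2}\sin 2u$ after a translation of the angular variable, which by subsection~\ref{cicloid} describes a curve parallel to an astroid; this is case~(a). When $c_{2}=0$ but $c_{3}\neq 0$, an analogous rotation normalises $p$ to $a_{0}+c_{3}\sin 3u$, and subsection~\ref{cicloid} with $k=3$ identifies $K$ as bounded by a curve parallel to a Steiner curve, giving case~(b). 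When both $c_{2}$ and $c_{3}$ are nonzero, I would write $p=p^{(2)}+p^{(3)}$ with
\[
p^{(2)}=\alpha+a_{2}\cos 2\varphi+b_{2}\sin 2\varphi,\quad p^{(3)}=\beta+a_{3}\cos 3\varphi+b_{3}\sin 3\varphi,\quad \alpha+\beta=a_{0},
\]
and choose $\alpha,\beta$ so that each $p^{(i)}$ is the support function of a convex set of the types in (a) and (b) respectively. Additivity of support functions under Minkowski summation then exhibits $K$ as the Minkowski sum of the two corresponding compact convex sets, giving case~(c).

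The converse is immediate: in each of (a), (b), (c) the Fourier expansion of $p$ is supported on harmonics of orders $0$, $2$ and~$3$, so only the $n=3$ contribution in~(\ref{6febrer}) survives and there the comparison $n^{4}-5n^{2}+4=5(n^{2}-1)$ is sharp, yielding equality in~(\ref{delta1}).

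I expect the delicate step to be the splitting in case~(c): the requirements $\alpha\geq 3c_{2}$ and $\beta\geq 8c_{3}$, which ensure $p^{(i)}+(p^{(i)})''\geq 0$, together with $\alpha+\beta=a_{0}$ demand the uniform bound $a_{0}\geq 3c_{2}+8c_{3}$, whereas the convexity condition $p+p''\geq 0$ of $K$ only yields a pointwise inequality in which two cosines of frequencies $2$ and $3$ appear on the right-hand side. Bridging this gap, either by a sharper arithmetic argument exploiting that these frequencies are incommensurate or by reading the Minkowski decomposition in a suitably generalised sense, is where the main work of the proof lives.
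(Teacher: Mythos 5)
Your characterization of equality ($c_{n}=0$ for all $n\ge 4$, obtained from $(n^{4}-5n^{2}+4)-5(n^{2}-1)=(n^{2}-1)(n^{2}-9)$) and your handling of cases (a) and (b) coincide with the paper's argument. The only divergence is in case (c), and there the ``delicate step'' you flag is not part of the paper's proof: the paper splits $p=p_{1}+p_{2}$ with $p_{1}=a_{0}+a_{2}\cos 2\varphi+b_{2}\sin 2\varphi$ and $p_{2}=a_{3}\cos 3\varphi+b_{3}\sin 3\varphi$, assigning the whole constant term to $p_{1}$ and none to $p_{2}$, and explicitly describes $D_{1}$, $D_{2}$ as \emph{non necessarily convex} domains with \emph{generalized} support functions in the sense of the preliminaries; $D_{2}$ is then literally the interior of a Steiner curve, and the Minkowski sum in item (c) is to be read through addition of generalized support functions. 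Your stricter goal of choosing $\alpha+\beta=a_{0}$ so that both summands are genuinely convex can indeed fail: convexity of $K$ only gives $a_{0}\ge \max_{\varphi}\bigl(3c_{2}\cos(2\varphi-\psi_{2})+8c_{3}\cos(3\varphi-\psi_{3})\bigr)$, which for generic phases is strictly smaller than $3c_{2}+8c_{3}$, so the obstruction you identify is real and cannot be removed by sharper arithmetic. (Evaluating at the two antipodal maxima of the second harmonic, where the third harmonic takes opposite values, does show $a_{0}\ge 3c_{2}$, so the paper's $D_{1}$ is automatically convex; it is only the third-harmonic summand that must be allowed to be non-convex.) In short: same route, and you should close case (c) by adopting the generalized reading you yourself propose as a fallback, which is exactly what the paper does.
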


\begin{proof}
It follows from the proof of Theorem \ref{teo51}  that equality in \eqref{delta1}  holds  if and only if the support function of the domain with respect to the Steiner point is of the form
$$p(\varphi)=a_{0}+a_{2}\cos 2\varphi+b_{2}\sin 2\varphi+a_{3}\cos 3\varphi+b_{3}\sin 3\varphi.$$ 

If we put $p_{1}(\varphi)=a_{0}+a_{2}\cos 2\varphi+b_{2}\sin 2\varphi$ 
and $p_{2}(\varphi)=a_{3}\cos 3\varphi+b_{3}\sin 3\varphi$, 
we have $p(\varphi)=p_{1}(\varphi)+p_{2}(\varphi)$
and so $K$ is the Minkowski
sum of the non necessarily convex domains $D_{1}$ and $D_{2}$ with generalized support functions $p_{1}(\varphi)$ and $p_{2}(\varphi)$ respectively.

We know, by the proof of Theorem \ref{teo33},  that $D_{1}$ is the interior  of a curve parallel to an astroid or a disc.
For $p_{2}(\varphi)$ we make the change of variable given by $u=\varphi-{\varphi_{0}}/{3}$,  where $\tan \varphi_{0} ={b_{3}}/{a_{3}}$ and we get $p_{2}(u)=a\cos(3u),$ with $a=a_{3}/\cos(\varphi_{0}).$ From subsection \ref{cicloid} it follows that $D_{2}$
is the interior of a Steiner curve.  
\end{proof}

\subsection*{Relationship with the pedal curve }
\medskip
If $F$ is the area  of $K$ and $A$ is the area enclosed by the pedal curve associated to $K$ with respect to its Steiner point we obviously have $A\ge F$, with equality if and only if $K$ is a disk, and
$$
A-F=\frac{1}{2}\int_{0}^{2\pi}p'^{2}\,d\varphi.
$$

\begin{teorema}\label{teo7.1}
Let $K$ be a compact
strictly convex set of area $F$ bounded by a curve~$\Gamma=\partial K$ of class ${\mathcal C}^{2}$ and length $L$. Let  $F_{e}$ be the area with multiplicities enclosed by the evolute of~$\Gamma$. 
Let $A$ be the area enclosed by the pedal curve associated to $K$ with respect to its Steiner point. Then
$$
\pi |F_{e}| -\Delta \ge \frac{40}{9} \left(\pi (A-F)+\frac{2}{3} L^{2}-\frac{8}{9} \int_{P\notin K}\sin^{3}\omega\,dP\right).
$$
The right hand side of this inequality is a strictly positive quantity except when $\pi|F_{e}|-\Delta=0$ in which case it also vanishes.
Equality holds for the same compact sets as in Proposition~\ref{coro2}.
\end{teorema}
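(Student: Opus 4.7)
The plan is to reduce the claimed inequality, via Fourier series, to a term-by-term numerical inequality that is easy to verify. The starting point is formula \eqref{6febrer} from the proof of Theorem~\ref{teo51}, which expresses the Hurwitz deficit as
$$\pi|F_{e}|-\Delta=\frac{\pi^{2}}{2}\sum_{n\geq 3}(n^{2}-1)(n^{2}-4)c_{n}^{2},$$
where $c_{n}^{2}=a_{n}^{2}+b_{n}^{2}$ are the squared Fourier moduli of the support function $p$ of $K$ taken with respect to the Steiner point (so that $c_{1}=0$). My goal is to show that the parenthesis on the right-hand side of the theorem admits an equally clean expression in terms of the $c_{n}$, and then compare the two series coefficient by coefficient.

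First, I would compute $A-F=\tfrac{1}{2}\int_{0}^{2\pi}p'^{2}\,d\varphi$ via Parseval, obtaining
$$\pi(A-F)=\frac{\pi^{2}}{2}\sum_{n\geq 2}n^{2}c_{n}^{2}.$$
Next, using \eqref{w3}, I would solve for $\pi^{2}c_{2}^{2}=\tfrac{4}{9}\int_{P\notin K}\sin^{3}\omega\,dP-\tfrac{1}{3}L^{2}$, so that
$$\frac{2}{3}L^{2}-\frac{8}{9}\int_{P\notin K}\sin^{3}\omega\,dP=-2\pi^{2}c_{2}^{2}.$$
Adding this to $\pi(A-F)$ makes the $n=2$ term cancel, and the whole parenthesis in the theorem collapses to $\frac{\pi^{2}}{2}\sum_{n\geq 3}n^{2}c_{n}^{2}$. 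After multiplying by $\tfrac{40}{9}$, the target inequality reduces to the pointwise comparison $(n^{2}-1)(n^{2}-4)\geq \tfrac{40}{9}n^{2}$ for every integer $n\geq 3$.

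The remaining step is an elementary numerical check: writing $(n^{2}-1)(n^{2}-4)/n^{2}=n^{2}-5+4/n^{2}$, this expression is strictly increasing for $n\geq 3$ and equals $40/9$ at $n=3$, so the inequality holds for all $n\geq 3$ with equality only when $n=3$. This simultaneously yields the positivity statement: both sides of the theorem vanish precisely when $c_{n}=0$ for every $n\geq 3$, and otherwise the right-hand side is strictly positive. Equality in the theorem amounts to $c_{n}=0$ for all $n\geq 4$, i.e.\ the support function (with respect to the Steiner point) is a trigonometric polynomial of degree at most three; as in the proof of Proposition~\ref{coro2}, this identifies $K$ as a Minkowski sum of a disk or a set parallel to an astroid with, possibly, a set parallel to a Steiner curve, exactly the list claimed in the statement. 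I do not anticipate any substantive obstacle; the only care needed is bookkeeping the Fourier normalizations and confirming that the cases of vanishing on the two sides coincide.
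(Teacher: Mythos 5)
Your proposal is correct and follows essentially the same route as the paper: both rest on the identity \eqref{6febrer}, the Parseval expression for $A-F$, the use of \eqref{w3} to eliminate $c_{2}$, and the coefficientwise bound $(n^{2}-1)(n^{2}-4)\geq \tfrac{40}{9}n^{2}$ for $n\geq 3$ with equality at $n=3$. The only difference is presentational (you rewrite the right-hand side as a Fourier series and compare term by term, whereas the paper chains the inequalities in the other direction), and your equality and positivity analyses match the paper's.
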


\begin{proof}
From \eqref{6febrer} and \eqref{w3} it follows
\begin{equation*}
\begin{split}
\pi |F_{e}| -\Delta &=\frac{\pi^{2}}{2} \sum_{n\ge 3} (n^{2}-1) (n^{2}-4) c_{n}^{2} \ge \frac{20}{9}\pi^{2}\sum_{n\ge 3}n^{2} c_{n}^{2} \\*[5pt]
&=\frac{20}{9} \pi^{2}\left(\sum_{n\ge 2}n^{2}c_{n}^{2} -4 c_{2}^{2}\right)=\frac{20}{9}\pi \left(\int p'^{2}\,d\varphi-4\pi c_{2}^{2}\right)\\*[5pt]
&=\frac{20}{9}\pi \left(\int p'^{2}\,d\varphi-4\pi c_{2}^{2}\right)=\frac{40}{9}\left[ \pi(A-F)+\frac{2}{3}L^{2}-\frac{8}{9}\int_{P \notin K}\!\sin^{3}\omega\,dP\right].\\*[5pt]
\end{split}
\end{equation*}
Moreover the right hand side vanishes if and only if $c_{n}=0$ for $n\geq 3$ as well as $\pi|F_{e}|-\Delta.$

Equality holds if and only if $c_{n}=0$, $n\ge 4$ and the result follows as in Proposition~\ref{coro2}.
\end{proof}

\subsection*{Relationship with the \boldmath$\mathcal{L}^{2}$ metric.}
\smallskip	

Consider now the quantity $\delta_{2}(K)$ equal to the distance in~$\mathcal{L}^{2}(S^{1})$, where $S^{1}$ is the unit circle, between the support function of~$K$ and the support function of the Steiner disk of~$K$. We have  that
$$
\delta_{2}(K)^{2}=\pi\sum_{n\ge 2}c_{n}^{2},
$$
where $c_{n}^{2}=a_{n}^{2}+b_{n}^{2}$ being $a_{n}$, $b_{n}$ the Fourier coefficients of the support function of~$K$ with respect to its Steiner point (\cite{groemer}).
Clearly the quantity $\delta_{2}(K)$ vanishes only when $K$ is a disk. 

\begin{teorema}\label{teo7.3}
Let $K$ be a compact strictly convex set of area $F$ bounded by a curve $\Gamma=\partial K$ of class ${\mathcal C}^{2}$ and length $L$. Let $F_{e}$ be the area with multiplicities of the evolute of~ $\Gamma$.
Then
$$
\pi |F_{e}|-\Delta \ge 20 \left(\pi \delta_{2} (K)^{2}+\frac{L^{2}}{3}-\frac{4}{9}\int_{P\notin K}\sin^{3}\omega\,dP\right).
$$
The right hand side of this inequality is a strictly positive quantity except when $\pi|F_{e}|-\Delta=0$ in which case it also vanishes.
Equality holds for the same compact sets as in Proposition~\ref{coro2}. 
\end{teorema}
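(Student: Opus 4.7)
The plan is to repeat the template of Theorem \ref{teo7.1} with a sharper coefficient chosen so that the residual sum collapses into $\delta_{2}(K)^{2}$. Everything hinges on the Fourier expression
\begin{equation*}
\pi|F_{e}|-\Delta=\frac{\pi^{2}}{2}\sum_{n\geq 3}(n^{2}-1)(n^{2}-4)\,c_{n}^{2}
\end{equation*}
already established in \eqref{6febrer}, and on the identity $\delta_{2}(K)^{2}=\pi\sum_{n\geq 2}c_{n}^{2}$.

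First I would observe that the polynomial $(n^{2}-1)(n^{2}-4)$ is strictly increasing in $n$ for $n\geq 3$, and that its value at $n=3$ equals $8\cdot 5=40$; hence $(n^{2}-1)(n^{2}-4)\geq 40$ for every $n\geq 3$, with equality exactly at $n=3$. Applying this termwise yields
\begin{equation*}
\pi|F_{e}|-\Delta\;\geq\;20\pi^{2}\sum_{n\geq 3}c_{n}^{2}\;=\;20\pi^{2}\Bigl(\sum_{n\geq 2}c_{n}^{2}-c_{2}^{2}\Bigr)\;=\;20\pi\,\delta_{2}(K)^{2}-20\pi^{2}c_{2}^{2}.
\end{equation*}
Then I would invoke Hurwitz's identity \eqref{w3}, which gives $\pi^{2}c_{2}^{2}=\tfrac{4}{9}\int_{P\notin K}\sin^{3}\omega\,dP-\tfrac{L^{2}}{3}$, and substitute to obtain
\begin{equation*}
\pi|F_{e}|-\Delta\;\geq\;20\Bigl(\pi\,\delta_{2}(K)^{2}+\frac{L^{2}}{3}-\frac{4}{9}\int_{P\notin K}\sin^{3}\omega\,dP\Bigr),
\end{equation*}
which is exactly the claimed inequality.

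For the strict positivity statement, the cleanest route is to redo the same substitution backwards on the right-hand side rather than to argue geometrically: since $\pi\,\delta_{2}(K)^{2}=\pi^{2}c_{2}^{2}+\pi^{2}\sum_{n\geq 3}c_{n}^{2}$, the bracket collapses to $\pi^{2}\sum_{n\geq 3}c_{n}^{2}$, so the RHS equals $20\pi^{2}\sum_{n\geq 3}c_{n}^{2}$. This is manifestly non-negative, and it vanishes if and only if $c_{n}=0$ for all $n\geq 3$, which by \eqref{6febrer} is equivalent to $\pi|F_{e}|-\Delta=0$. The equality case follows similarly: the key inequality $(n^{2}-1)(n^{2}-4)\geq 40$ is strict for $n\geq 4$, so equality throughout forces $c_{n}=0$ for $n\geq 4$, leaving a support function of the form $a_{0}+a_{2}\cos 2\varphi+b_{2}\sin 2\varphi+a_{3}\cos 3\varphi+b_{3}\sin 3\varphi$ (relative to the Steiner point); the classification of such $K$ as Minkowski sums of (parallels to) an astroid, a disk and a Steiner curve is then exactly the argument of Proposition \ref{coro2}.

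There is essentially no hard step; the only thing to be careful about is the double bookkeeping between $\delta_{2}(K)^{2}$ and $c_{2}^{2}$, to ensure the constants $20$, $\tfrac{1}{3}$, $\tfrac{4}{9}$ come out right. The mild conceptual point worth flagging is why $20$ is optimal in this family of inequalities: it is the maximum value of $\lambda$ for which $(n^{2}-1)(n^{2}-4)\geq \lambda$ holds uniformly on $n\geq 3$, pinned down by the $n=3$ mode (the Steiner-curve mode), consistent with the equality case.
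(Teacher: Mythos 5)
Your proposal is correct and follows essentially the same route as the paper: the Fourier identity \eqref{6febrer}, the termwise bound $(n^{2}-1)(n^{2}-4)\geq 40$ for $n\geq 3$, the split $\sum_{n\geq 3}c_{n}^{2}=\sum_{n\geq 2}c_{n}^{2}-c_{2}^{2}$ to produce $\delta_{2}(K)^{2}$, and the elimination of $c_{2}^{2}$ via \eqref{w3}. The positivity and equality discussions also match the paper's argument, so there is nothing to add.
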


\begin{proof}
According to \eqref{6febrer} and \eqref{w3}  we have
\begin{equation*}
\begin{split}
\pi|F_{e}|-\Delta&=\frac{\pi^{2}}{2}\sum_{n\ge 3}(n^{2}-1)(n^{2}-4)c_{n}^{2} \ge 20 \pi^{2}\sum_{n\ge 3}c_{n}^{2}\\*[5pt]
& =20\pi\left(\pi \sum_{n\ge 2}c_{n}^{2} -\pi c_{2}^{2}\right)
=20\pi  \delta_{2}(K)^{2}-20 \left(\frac{4}{9}\int_{P\notin K}\sin^{3}\omega\,dP-\frac{L^{2}}{3}\right)
\end{split}
\end{equation*}
as required. Equality holds if and only if $c_{n}=0$, $n\ge 4$.
\end{proof}

\section{Convex sets of constant width}
Although Hurwitz's inequality \eqref{aa} can not be improved 
for general convex domains, it is possible to obtain a stronger inequality for convex sets of \emph{constant width}, that is those convex sets whose orthogonal projection in any direction have the same length. In this case  we have the following result. 

\begin{teorema}\label{bvb}
Let $K$ be a compact strictly convex set of constant width and area~$F$ bounded by a curve $\Gamma=\partial K$ of class ${\mathcal C}^{2}$ and length $L$. Let $F_{e}$ be the area with multiplicities of the evolute of $\Gamma$.
Then
\begin{equation}\label{aaa}L^{2}-4\pi F\leq \frac{4}{9}\pi |F_{e}|.\end{equation}
Equality holds if and only if $\Gamma$ is a circle or  a curve  parallel to a Steiner curve at distance $L/2\pi$. 
\end{teorema}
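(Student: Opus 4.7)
The plan is to run the Fourier analysis of the support function used in the proofs of Theorems \ref{teo33} and \ref{teo51}, exploiting the fact (recalled in Subsection \ref{cvxsets}) that constant width forces $a_{n}=b_{n}=0$ for every even $n\geq 2$. This kills the second harmonic of $p$ and promotes $n=3$ to the lowest nonconstant mode, which is precisely what permits strengthening Hurwitz's coefficient from $1$ to $\tfrac{4}{9}$.

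Place the Steiner point at the origin, so $a_{1}=b_{1}=0$, and expand
$$p(\varphi) = a_{0} + \sum_{n\geq 2}\bigl(a_{n}\cos n\varphi + b_{n}\sin n\varphi\bigr),\qquad c_{n}^{2} := a_{n}^{2}+b_{n}^{2}.$$
Parseval applied to $W_{q}$ and $W_{q'}$, with $q = p - L/(2\pi)$, yields
$$\Delta = 2\pi^{2}\sum_{n\geq 2}(n^{2}-1)c_{n}^{2},\qquad \pi|F_{e}| = \frac{\pi^{2}}{2}\sum_{n\geq 2}n^{2}(n^{2}-1)c_{n}^{2},$$
and a direct subtraction gives
$$\frac{4}{9}\pi|F_{e}| - \Delta = \frac{2\pi^{2}}{9}\sum_{n\geq 2}(n^{2}-1)(n^{2}-9)\,c_{n}^{2}.$$
Under constant width only odd $n\geq 3$ contribute, and for such $n$ the factor $(n^{2}-1)(n^{2}-9)$ is nonnegative, vanishing exactly at $n=3$. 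This establishes \eqref{aaa}, with the right-hand side vanishing if and only if $c_{n}=0$ for all $n\geq 5$.

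For the equality analysis the surviving support function is $p(\varphi) = a_{0} + a_{3}\cos 3\varphi + b_{3}\sin 3\varphi$. If $a_{3}=b_{3}=0$, then $K$ is a disk. Otherwise, set $a = \sqrt{a_{3}^{2}+b_{3}^{2}}>0$ and, exactly as in the final step of the proof of Theorem \ref{teo33}, choose $\varphi_{0}$ so that after the substitution $u=\varphi-\varphi_{0}$ the support function becomes $\tilde p(u) = a_{0} + a\sin 3u$. Comparing with Subsection \ref{cicloid} in the case $B=3$ (hence $k=3$), the generalized support function $a\sin 3u$ defines a Steiner curve; the additive constant $a_{0} = L/(2\pi)$ then identifies $\Gamma$ as its parallel at distance $L/(2\pi)$, and strict convexity $p+p''>0$ is automatic from the hypothesis (it reads $a_{0} > 8a$). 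The only delicate point in the whole argument is the sign of $(n^{2}-1)(n^{2}-9)$: at $n=2$ this factor equals $-15$, so the bound $\tfrac{4}{9}\pi|F_{e}|$ is simply false without some hypothesis killing the second harmonic, and constant width supplies exactly this.
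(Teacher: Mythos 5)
Your proposal is correct and follows essentially the same route as the paper: both rest on the Parseval expansion of $W_{q}$ and $W_{q'}$, the observation that constant width kills all even harmonics so that the coefficient $(n^{2}-1)(n^{2}-9)$ is nonnegative on the surviving modes $n\geq 3$, and the same normalization of the $n=3$ harmonic to identify the equality case as a parallel of a Steiner curve. The only cosmetic difference is that you write the difference $\tfrac{4}{9}\pi|F_{e}|-\Delta$ as an explicit Fourier sum, whereas the paper packages the same computation as the constant-width variant of Lemma \ref{lema}, namely $W_{q'}\geq 9W_{q}$.
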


\begin{proof}
Let $q(\varphi)=p(\varphi)-L/2\pi$ where $p(\varphi)$ is the support function of $K$.
As it has been said in  the proof of Proposition \ref{prop31} we have
$$
W_{q'}=2|F_{e}|,\quad W_{q}=2|F_{-L/2\pi}|=\frac{\Delta}{2\pi},
$$
and so
$$4\pi|F_{e}|-9\Delta=2\pi(W_{q'}-9W_{q}).$$

Since $K$ is of constant width, the Fourier series of its support function has only odd terms, see subsection \ref{cvxsets}. Following the proof  of Lemma \ref{lema} for this special case  one gets
$$
W_{q'}\geq 9W_{q}+\frac{9}{2\pi}\left(\int_{0}^{2\pi}q(\varphi)\,d\varphi\right)^{2}=9W_{q},
$$
and hence the inequality \eqref{aaa} follows. 
\smallskip	

Equality in \eqref{aaa} holds if and only if $a_{n}=b_{n}=0$, for $n\geq 5$.
This implies 
$$p(\varphi)=a_{0}+a_{1}\cos\varphi+b_{1}\sin\varphi+a_{3}\cos3\varphi+b_{3}\sin 3\varphi.$$
Taking the Steiner point $(a_{1},b_{1})$ as a new origin of coordinates the new support function of $K$ becomes 
$$\tilde{p}(\varphi)=a_{0}+a_{3}\cos 3\varphi+b_{3}\sin 3 \varphi.$$
We make, as in the proof of Proposition \ref{coro2}, the change of variable   $u=\varphi-{\varphi_{0}}/{3}$,  where $\tan \varphi_{0} ={b_{3}}/{a_{3}}.$
Then $$p(u)=a_{0}+a\cos(3u),$$
with $a={a_{3}}/{\cos\varphi_{0}}$. 
Notice that $a<a_{0}/8=L/16\pi$ because $p$ represents the support function of a strictly convex set $K$.

From aubsection~\ref{cicloid} it follows  that $\Gamma$ is a circle or a curve parallel to a Steiner curve.
\end{proof}

\pagebreak

\begin{figure}[h] 
   \centering
   \includegraphics[width=0.8\textwidth]{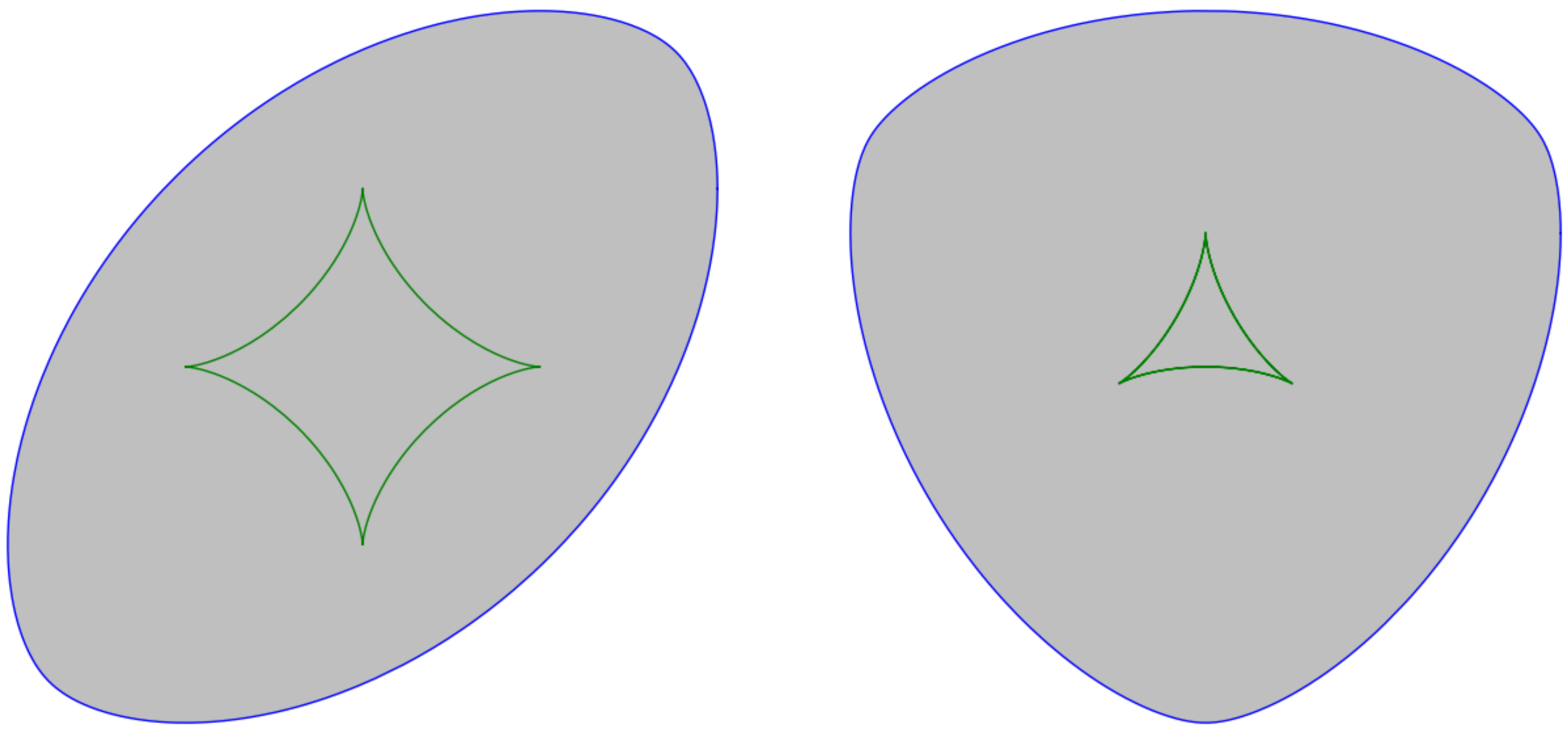} 
   \caption{Convex curves parallel to an astroid and to a Steiner curve at distance $L/2\pi$.}
   \label{fig:steiner}
\end{figure}

\begin{coro}Under the same hypothesis as in Theorem \ref{bvb} one has 
$$(A-F)\leq \frac{1}{8}|F_{e}|$$ where $A$ is the area enclosed by 
the associated pedal curve to $K$ with respect to its Steiner point. 

Equality holds if and only if $\Gamma$ is a circle or  a curve  parallel to a Steiner curve at distance $L/2\pi$.
\end{coro}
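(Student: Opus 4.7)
The plan is to expand the support function $p(\varphi)$ of $K$ with respect to its Steiner point in Fourier series and compare the two quantities $A-F$ and $|F_e|$ term by term. Because the reference point is the Steiner point, the first harmonics satisfy $a_1 = b_1 = 0$, and because $K$ has constant width, all even harmonics beyond $a_0$ also vanish (subsection \ref{cvxsets}), so the only surviving coefficients are $a_n$, $b_n$ with $n$ odd and $n \geq 3$.

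Using the identity $A - F = \frac{1}{2}\int_0^{2\pi} p'^2\,d\varphi$ recorded just before Theorem \ref{teo7.1}, together with $|F_e| = \frac{1}{2}W_{q'}$ from the proof of Proposition \ref{prop31} (with $q = p - L/2\pi$), Parseval's identity yields
$$
A-F = \frac{\pi}{2}\sum_{n\text{ odd},\, n\geq 3} n^{2} c_{n}^{2}, \qquad |F_{e}| = \frac{\pi}{2}\sum_{n\text{ odd},\, n\geq 3} n^{2}(n^{2}-1) c_{n}^{2},
$$
with $c_n^2 = a_n^2 + b_n^2$. The inequality $A - F \leq \frac{1}{8}|F_e|$ is then equivalent to
$$
\sum_{n\text{ odd},\, n\geq 3} n^{2}(n^{2}-9) c_{n}^{2} \geq 0,
$$
which is immediate since $n^{2} - 9 \geq 0$ for every odd $n \geq 3$.

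Equality holds exactly when $c_n = 0$ for all odd $n \geq 5$, so the support function collapses to $p(\varphi) = a_0 + a_3\cos 3\varphi + b_3 \sin 3\varphi$. The same change of variable $u = \varphi - \varphi_0/3$, $\tan\varphi_0 = b_3/a_3$, used at the end of the proof of Theorem \ref{bvb} then identifies $K$ with a disk (if $a_3 = b_3 = 0$) or with the convex set bounded by a curve parallel to a Steiner curve at distance $a_0 = L/(2\pi)$. No real obstacle is anticipated; the argument is a direct specialization of the Fourier comparison already used to prove Theorem \ref{bvb}, with the discriminating factor $n^2 - 9$ in place of $n^2 - 1$ or $n^2(n^2-1)$.
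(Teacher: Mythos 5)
Your proposal is correct, and every step checks out: with the Steiner point as origin one has $c_{1}=0$, constant width kills the even harmonics, and Parseval gives $A-F=\frac{\pi}{2}\sum n^{2}c_{n}^{2}$ and $|F_{e}|=\frac{1}{2}W_{q'}=\frac{\pi}{2}\sum n^{2}(n^{2}-1)c_{n}^{2}$ over odd $n\geq 3$, so the claim reduces to $\sum n^{2}(n^{2}-9)c_{n}^{2}\geq 0$, which is clear, with equality exactly when $c_{n}=0$ for odd $n\geq 5$. However, your route is genuinely different from the paper's. The paper does not compute anything: it quotes Proposition 3.2 of the reference by Cuf\'{\i} and Revent\'os, namely $\frac{32}{9}\pi(A-F)\leq \Delta$ for constant width, and chains it with the already proved inequality $\Delta\leq \frac{4}{9}\pi|F_{e}|$ of Theorem \ref{bvb} to get $\frac{32}{9}\pi(A-F)\leq\frac{4}{9}\pi|F_{e}|$; the equality discussion is likewise outsourced to Corollary 4.4 of that reference together with the equality case of Theorem \ref{bvb}. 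Your direct Fourier comparison buys self-containedness and yields the equality characterization in a single stroke (the weight $n^{2}(n^{2}-9)$ vanishes only at $n=3$), whereas the paper's argument is shorter on the page but requires the reader to consult an external result and to observe that equality in the composite inequality forces equality in both links of the chain. Both arguments use constant width in the same essential way, to eliminate the $n=2$ harmonic, without which the factor $n^{2}-9$ would be negative and the inequality would fail.
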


\begin{proof}
By Proposition 3.2 of \cite{CR} one has 
$$\frac{32}{9}\pi (A-F)\leq \Delta.$$
This inequality combined with \eqref{aaa} gives the result. The characterization of  equality follows from Corollary 4.4 of \cite{CR}  and Theorem \ref{bvb}.
\end{proof}

\begin{remark}
If $p$ is the support function of $K$ the \emph{Wigner caustic} of $\Gamma=\partial K$ is the curve given by the support function $q(\varphi)=\frac12(p(\varphi)-p(\varphi+\pi))$.  In \cite{zwier}    the area $A_{w}$ of the Wigner caustic of $\Gamma$ is considered. If this area is counted with multiplicities it is proved that 
\begin{equation*}
L^{2}-4\pi F\geq 4\pi |A_{w}|
\end{equation*}
with equality if and only if $K$ is of constant width.

In the case of constant width the Wigner caustic and the interior parallel curve at distance $L/2\pi$ coincide.
So using  Theorem \ref{bvb} and \eqref{areaxi} one obtains, in the case of constant width, the estimate
$$
|A_{w}|\leq \frac19 |F_{e}|
$$
with equality if and only if $\Gamma$ is a circle or a curve parallel to a Steiner curve at distance $L/2\pi$.	
\end{remark}
\medskip	

We can improve inequality \eqref{aaa} in terms of the visual angle.
\begin{teorema}\label{teo6.4}
Let $K$ be a compact
 strictly convex set of constant width and area~$F$ bounded by a curve $\Gamma=\partial K$ of class ${\mathcal C}^{2}$ and length $L$. Let $F_{e}$ be the area with multiplicities of the evolute of $\Gamma$ and $\Delta$ be the isoperimetric deficit of $\Gamma$.
Then
$$
\frac{4}{9}\pi|F_{e}|-\Delta\!\ge\! \frac{64}{9} \int_{P\notin K}\!\left(\omega \!-\!2\sin\omega+\sin 2\omega-\frac{1}{4} \sin 4\omega \!-\!\sin^{3}
\omega\right)\,dP.
$$
The right hand side of this inequality is a strictly positive quantity except when $\frac49 \pi|F_{e}|-\Delta=0$ in which case it also vanishes.

Equality  holds if and only if $K$ is a disk or it is bounded by a curve parallel  to a Steiner curve or it is bounded by a curve parallel  to an hypocycloid of five cusps or the Minkowski sum of compact sets  of the previous types.
\end{teorema}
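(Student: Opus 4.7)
My plan is to mirror the Fourier-series strategy of Theorem~\ref{teo51}, but to exploit the constant-width hypothesis to eliminate even harmonics and to center the sharper inequality at $n=5$ instead of $n=3$. I set $q(\varphi)=p(\varphi)-L/2\pi$ with $p$ the support function of $K$ relative to its Steiner point. As in the proof of Theorem~\ref{bvb}, $\tfrac{4}{9}\pi|F_{e}|-\Delta=\tfrac{2\pi}{9}(W_{q'}-9W_{q})$; constant width eliminates even harmonics, so Parseval yields
$$
\tfrac{4}{9}\pi|F_{e}|-\Delta=\tfrac{2\pi^{2}}{9}\sum_{\substack{n\geq 3\\ n\text{ odd}}}(n^{2}-1)(n^{2}-9)c_{n}^{2},
$$
with $c_{n}^{2}=a_{n}^{2}+b_{n}^{2}$.

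Next I use the algebraic identity $(n^{2}-1)(n^{2}-9)=(n^{2}-1)(n^{2}-25)+16(n^{2}-1)$. Summed against $c_{n}^{2}$, the $16(n^{2}-1)$ part contributes exactly $\tfrac{16}{9}\Delta$; in the remaining sum the $n=3$ weight equals $-128$, the $n=5$ weight vanishes, and every weight for odd $n\geq 7$ is non-negative. Dropping the non-negative tail yields $\tfrac{4}{9}\pi|F_{e}|-\Delta\geq\tfrac{16}{9}\Delta-\tfrac{256\pi^{2}}{9}c_{3}^{2}$, with equality if and only if $c_{n}=0$ for every odd $n\geq 7$. To express the right-hand side as a visual-angle integral I combine three identities: Crofton's formula~\eqref{bb}; identity~\eqref{ss} at $n=3$, which reads $\int(-2\sin\omega+2\sin 2\omega-\tfrac12\sin 4\omega)\,dP=L^{2}-8\pi^{2}c_{3}^{2}$; and the constant-width reduction $\int\sin^{3}\omega\,dP=\tfrac{3}{4}L^{2}$ obtained by setting $c_{2}=0$ in~\eqref{w3}. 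Taking their linear combination with weights $1$, $\tfrac12$, $-1$ respectively gives
$$
\tfrac{\Delta}{4}-4\pi^{2}c_{3}^{2}=\int_{P\notin K}\Bigl(\omega-2\sin\omega+\sin 2\omega-\tfrac14\sin 4\omega-\sin^{3}\omega\Bigr)dP,
$$
and multiplying by $\tfrac{64}{9}$ recovers the stated inequality.

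For the strict-positivity and equality assertions I observe that Parseval also gives $\tfrac{\Delta}{4}-4\pi^{2}c_{3}^{2}=\tfrac{\pi^{2}}{2}\sum_{n\geq 5,\,n\text{ odd}}(n^{2}-1)c_{n}^{2}$, so the right-hand side of the theorem vanishes exactly when $c_{n}=0$ for all odd $n\geq 5$, which is precisely when the left-hand side vanishes as well. For equality in the main inequality the residual sum over $n\geq 7$ must vanish, so $c_{n}=0$ for every odd $n\geq 7$; then the support function with respect to the Steiner point reduces to $a_{0}+\sum_{j\in\{3,5\}}(a_{j}\cos j\varphi+b_{j}\sin j\varphi)$, which exhibits $K$ as a Minkowski sum of a disk, a set parallel to a Steiner curve (handled by the rotation argument already used in Proposition~\ref{coro2}), and a set parallel to a hypocycloid of five cusps (handled analogously via subsection~\ref{cicloid} with $k=5$). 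The main obstacle is the arithmetic of the translation step: finding precisely the linear combination of Crofton's formula, \eqref{ss}, and the $\sin^{3}\omega$ identity whose integrand matches the one in the statement. Otherwise no new analytic or geometric machinery beyond what was already used for Theorems~\ref{teo51} and~\ref{bvb} is required.
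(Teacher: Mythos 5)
Your proof is correct and follows essentially the same route as the paper: both reduce $\tfrac{4}{9}\pi|F_e|-\Delta$ to $\tfrac{2\pi^2}{9}\sum(n^2-1)(n^2-9)c_n^2$, bound it below by $\tfrac{16}{9}\Delta-\tfrac{256\pi^2}{9}c_3^2$ (your identity $(n^2-1)(n^2-9)=16(n^2-1)+(n^2-1)(n^2-25)$ is just a rephrasing of the paper's estimate $(n^2-1)(n^2-9)\ge 16(n^2-1)$ for $n\ge 5$), and translate via the same combination of Crofton's formula, $I_3$, and $I_2=L^2$. Your equality analysis ($c_n=0$ for odd $n\ge 7$) and positivity discussion also match the paper's; as a minor aside, your coefficient $-\tfrac14\sin 4\omega$ agrees with the theorem statement, and the $-\tfrac12$ appearing in the paper's intermediate display is a typo.
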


\begin{proof}
If $p$ is the support function of $K$ we have (see the proof of Theorem~\ref{bvb})
$$
\frac{4}{9} \pi |F_{e}|-\Delta=\frac{2\pi}{9} (W_{q'}-9W_{q})=\frac{2\pi^{2}}{9}\sum_{n\ge 5}(n^{2}-1)(n^{2}-9)c_{n}^{2}
$$
with $q(\varphi)=p(\varphi)-L/2\pi$ and  $c_{n}^{2}=a_{n}^{2}+b_{n}^{2}$, being $a_{n}$, $b_{n}$ the Fourier coefficients of the support function of $K$. Recall that since $K$ has constant width we have $c_{n}=0$, for $n$ even, $n\not=0$.

Since $(n^{2}-1)(n^{2}-9)\ge 16 (n^{2}-1)$ for $n\ge 5$ it follows that 
$$
\frac{4}{9}\pi |F_{e}|-\Delta\ge \frac{32\pi^{2}}{9}  \sum_{n\ge 5}(n^{2}-1)c_{n}^{2}=\frac{32\pi^{2}}{9} \left(\sum_{n\ge 2}(n^{2}-1)c_{n}^{2}-8c_{3}^{2}\right).
$$
But
$$
\pi\sum_{n\ge 2}(n^{2}-1)c_{n}^{2} =\int^{2\pi}_{0} (p'^{2}-p^{2})\,d\varphi +2\pi a_{0}^{2}=-2F+\frac{L^{2}}{2\pi}.
$$
So we get
\begin{equation}\label{329}
\frac{4}{9}\pi |F_{e}|-\Delta \ge \frac{32}{9} \left( -2\pi F+\frac{L^{2}}{2}-8\pi^{2}c_{3}^{2}\right).
\end{equation}
Now using Crofton's formula~\eqref{bb}, the formula~\eqref{ss} for $n=3$ and the fact that $L^{2}=I_{2}$
it follows that the second member of~\eqref{329} can be written as
\begin{multline*}
-\frac{16}{3} L^{2} +\frac{64}{9} \int_{P\notin K}(\omega-\sin\omega)\,dP +\frac{32}{9} I_{3}\\
=\frac{64}{9}\int_{P\notin K}(\omega -2\sin\omega +\sin 2\omega-\frac{1}{2} \sin 4\omega -\sin^{3}\omega)\,dP.
\end{multline*}
The right hand side of \eqref{329} vanishes if and only if $c_{n}=0$ for $n\geq 5$, as well as $\frac49 \pi|F_{e}|-\Delta.$

Moreover equality in~\eqref{329} holds if and only if $c_{n}=0$, $n\ge 7$.
If we put $p_{1}(\varphi)=a_{0}+a_{3}\cos 3\varphi+b_{3}\sin 3\varphi$ 
and $p_{2}(\varphi)=a_{5}\cos 5\varphi+b_{5}\sin 5\varphi$, 
we have $p(\varphi)=p_{1}(\varphi)+p_{2}(\varphi)$
and so $K$ is the Minkowski
sum of the domains $D_{1}$ and $D_{2}$ with generalized support functions $p_{1}(\varphi)$ and $p_{2}(\varphi)$ respectively. As seen before $D_{1}$ is parallel to a Steiner curve. 
For $D_{2}$ we can write
$$
p_{2}(\varphi)=\sqrt{a^{2}_{5}+b^{2}_{5}}\,\sin(5({\varphi_{0}/ 5}+\varphi))
$$
where $\tan\varphi_{0}=b_{5}/a_{5}.$ Then $D_{2}$ corresponds to the curve with support
function 
$$q(u)=\sqrt{a^{2}_{5}+b^{2}_{5}}\sin(5u),$$ 
which  by subsection~\ref{cicloid} is the interior of an hypocycloid of five cusps.
\end{proof}
\medskip	

Also the inequalities in Theorems \ref{teo7.1} and  \ref{teo7.3} can be improved for the case of constant width, 
as shown by the following corollaries.

\begin{coro}\label{coroaf}
Under the hypothesis of Theorem \ref{teo7.1} and assuming moreover that $K$ has constant width one has 
$$
\pi|F_{e}|-\Delta\ge \frac{40}{9} \pi (A-F).
$$
Equality  holds if and only if $K$ is a disk or it is bounded by a curve parallel  to a Steiner curve.
\end{coro}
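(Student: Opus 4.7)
The plan is to rerun the Fourier-coefficient comparison used in the proofs of Theorem \ref{teo7.1} and Theorem \ref{bvb}, but now exploit the constant-width condition to restrict the index set in the two relevant Parseval sums. Let $p(\varphi)$ be the support function of $K$ with respect to its Steiner point, and write $c_n^2=a_n^2+b_n^2$. Then $c_1=0$ (Steiner point normalization) and, as recalled in subsection~\ref{cvxsets}, constant width forces $c_n=0$ for every even $n\ge 2$.

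First I would recall two identities that are already used in the paper. From formula \eqref{6febrer} (derived in the proof of Proposition~\ref{prop31}) one has
$$
\pi|F_{e}|-\Delta=\frac{\pi^{2}}{2}\sum_{n\ge 3}(n^{2}-1)(n^{2}-4)\,c_{n}^{2},
$$
and a direct Parseval computation together with the formula $A-F=\tfrac12\int_0^{2\pi}p'^{2}\,d\varphi$ (from the \emph{Relationship with the pedal curve} subsection) gives
$$
A-F=\frac{\pi}{2}\sum_{n\ge 1} n^{2}\,c_{n}^{2}.
$$
Under the constant-width hypothesis both sums reduce to a sum over odd indices $n\ge 3$.

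The heart of the argument is the elementary term-by-term inequality
$$
(n^{2}-1)(n^{2}-4)\ge \tfrac{40}{9}\,n^{2}\qquad\text{for every integer } n\ge 3,
$$
which I would verify by noting the factorization
$$
9\bigl[(n^{2}-1)(n^{2}-4)-\tfrac{40}{9}n^{2}\bigr]=(n^{2}-9)(9n^{2}-4),
$$
which is non-negative for $n\ge 3$ and vanishes precisely at $n=3$. Multiplying by $c_n^2$ and summing over the odd $n\ge 3$ yields
$$
\pi|F_{e}|-\Delta\;\ge\;\frac{\pi^{2}}{2}\cdot\frac{40}{9}\sum_{n\ge 3,\,n\text{ odd}} n^{2}c_{n}^{2}\;=\;\frac{40}{9}\pi(A-F),
$$
which is the claimed inequality.

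For the equality case, the term-by-term bound is strict for every odd $n\ge 5$, so equality forces $c_n=0$ for all $n\ge 5$. Since $c_n=0$ is already known for all even $n\ge 2$ and for $n=1$, the support function reduces to
$$
p(\varphi)=a_{0}+a_{3}\cos 3\varphi+b_{3}\sin 3\varphi,
$$
and the same change of variables used at the end of the proof of Theorem~\ref{bvb} identifies $\Gamma$ as either a circle (when $a_3=b_3=0$) or a curve parallel to a Steiner curve. I do not foresee any real obstacle here: the only nontrivial step is the factorization identifying the coefficient $\tfrac{40}{9}$ as the sharp constant obtained at $n=3$, and this is a one-line check.
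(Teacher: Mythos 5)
Your proof is correct, and the key inequality you isolate, $(n^{2}-1)(n^{2}-4)\ge\tfrac{40}{9}n^{2}$ for $n\ge 3$ with equality only at $n=3$, is exactly the one driving the proof of Theorem~\ref{teo7.1}; your factorization $(n^{2}-9)(9n^{2}-4)$ and the Parseval identity $A-F=\tfrac{\pi}{2}\sum_{n\ge1}n^{2}c_{n}^{2}$ both check out, as does the equality analysis. The difference from the paper is purely organizational: the paper treats Theorem~\ref{teo7.1} as a black box and observes that, by \eqref{w3}, constant width forces $\tfrac43\int_{P\notin K}\sin^{3}\omega\,dP=L^{2}$ (since $c_{2}=0$), so the two correction terms $\tfrac23L^{2}-\tfrac89\int\sin^{3}\omega\,dP$ in the statement of Theorem~\ref{teo7.1} cancel and the corollary drops out in one line. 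You instead rerun the Fourier computation from \eqref{6febrer} and kill the $c_{2}$-correction at the source rather than at the level of the visual-angle integral. Your version is self-contained and avoids the detour through \eqref{w3}, at the cost of reproving part of Theorem~\ref{teo7.1}; the paper's version is shorter given that the theorem is already on record. One tiny remark: for the equality case you only need the strictness of the term-by-term bound at odd $n\ge5$ because the even coefficients already vanish, which is what you say, but it is worth noting that strictness in fact holds for all $n\ge4$, so the conclusion $c_{n}=0$ for $n\ge4$ is available without invoking constant width a second time.
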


\begin{proof}
Just note that in the constant width  case,  \eqref{w3} gives
$$
\frac43 \int_{P\notin K}\sin^{3}\omega\,dP= L^{2}
$$
and apply Theorem~\ref{teo7.1}.
\end{proof}

\begin{remark}
 A straightforward calculation involving the Fourier series of $p(\varphi)$ and $q(\varphi)$ shows that 
 $$A-F\geq |A_{w}|$$ 
 where  $A_{w}$ is the area of the Wigner caustic  counted with multiplicities, with equality in the case of constant width.   So  Theorem \ref{teo7.1} and Corollary \ref{coroaf} give  lower bounds for the Hurwitz deficit $\pi|F_{e}|-\Delta$ that involve $|A_{w}|$.
\end{remark} 

\begin{figure}[h]
\centering
\begin{tikzpicture}
    \node[anchor=south west,inner sep=0] (image) at (0,0) {\includegraphics[width=0.9\textwidth]{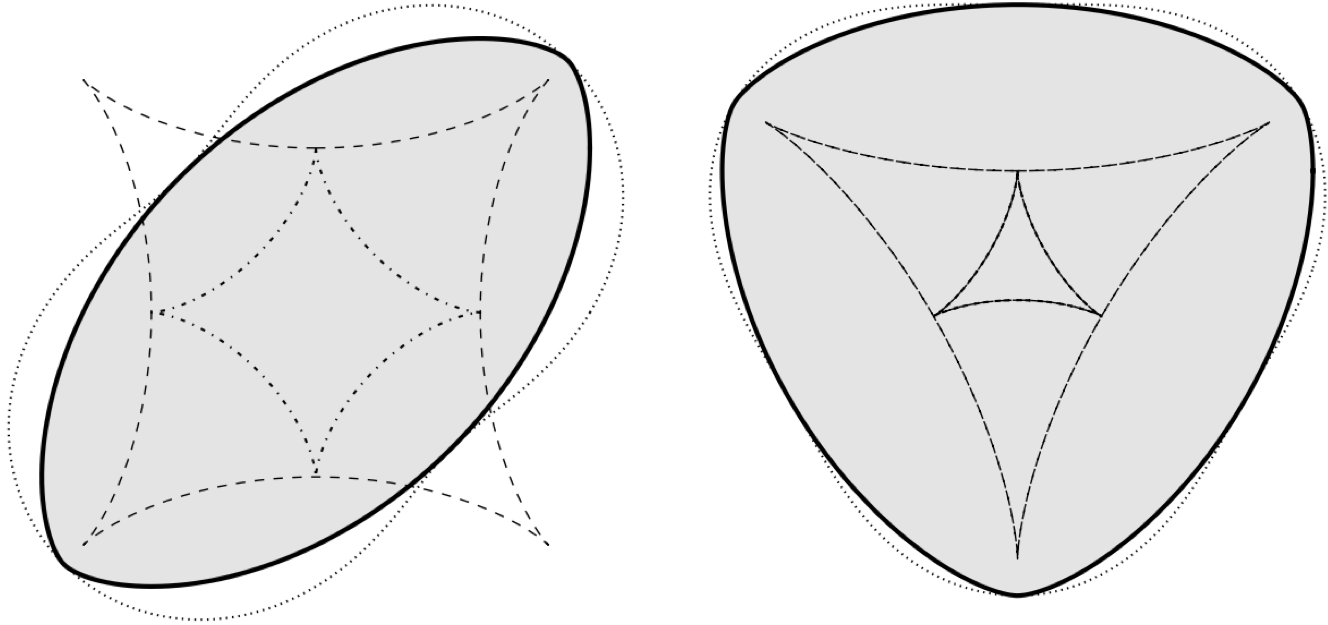}};
 \begin{scope}[x={(image.south east)},y={(image.north west)}]   
 	\node[text width=6cm, anchor=west, right] at (.1,-0.015) 	{\small \emph{Pedal}};
    \node[text width=6cm, anchor=west, right] at (0.0085,0.76) 	{\small \emph{Evolute}};
    \node[text width=6cm, anchor=west, right] at (0.26,0.65) 	{\small \emph{Parallel}};
    \node[text width=6cm, anchor=west, right] at (.63,0.07) 	{\small \emph{Pedal}};
    \node[text width=6cm, anchor=west, right] at (.58,0.6) 		{\small \emph{Evolute}};
    \node[text width=6cm, anchor=west, right] at (.78,0.6) 		{\small \emph{Wigner}$=$\emph{Parallel}};
  \end{scope}
\end{tikzpicture}
\caption{Different curves related to convex sets with central symmetry on the left and  constant width on the right.} \label{fig:corbes}
\end{figure}

\begin{coro}Under the hypothesis of Theorem \ref{teo7.3} and assuming  that $K$ has constant width one has  
$$
\pi |F_{e}|-\Delta \ge 20\,\pi \,\delta_{2} (K)^{2}.
$$
Moreover 
$$
|F_{e}|\geq 36\, \delta_{2}(K)^{2}.
$$
Equality holds in both inequalities if and only if $K$ is a disk or it is bounded by a curve  parallel to a Steiner curve.
\end{coro}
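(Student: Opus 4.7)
The plan is to reduce both inequalities to straightforward term-by-term estimates on the Fourier side, exactly as in the proofs of Theorems~\ref{teo6.4} and~\ref{teo7.3}. Throughout, place the origin at the Steiner point of $K$, so that the support function has the form $p(\varphi)=a_{0}+\sum_{n\ge 2}(a_{n}\cos n\varphi+b_{n}\sin n\varphi)$ and set $c_{n}^{2}=a_{n}^{2}+b_{n}^{2}$. Since $K$ has constant width, subsection~\ref{cvxsets} gives $c_{n}=0$ for every even $n\ge 2$, so only odd indices $n\ge 3$ contribute.

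For the first inequality, I would start from identity \eqref{6febrer},
\begin{equation*}
\pi|F_{e}|-\Delta=\frac{\pi^{2}}{2}\sum_{n\ge 3}(n^{2}-1)(n^{2}-4)c_{n}^{2},
\end{equation*}
and use that for odd $n\ge 3$ the coefficient satisfies $(n^{2}-1)(n^{2}-4)\ge 40$, with equality exactly at $n=3$ (indeed $(9-1)(9-4)=40$, and the function $n\mapsto(n^{2}-1)(n^{2}-4)$ is increasing for $n\ge 2$). Combining this with $\delta_{2}(K)^{2}=\pi\sum_{n\ge 2}c_{n}^{2}=\pi\sum_{n\ge 3\text{ odd}}c_{n}^{2}$ yields
\begin{equation*}
\pi|F_{e}|-\Delta\ \ge\ \frac{\pi^{2}}{2}\cdot 40\sum_{n\ge 3\text{ odd}}c_{n}^{2}\ =\ 20\pi\,\delta_{2}(K)^{2},
\end{equation*}
with equality if and only if $c_{n}=0$ for every odd $n\ge 5$.

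For the second inequality I would use the formula $|F_{e}|=\tfrac{1}{2}W_{q'}$ established in the proof of Proposition~\ref{prop31} together with the Parseval computation carried out in Lemma~\ref{lema}, giving
\begin{equation*}
|F_{e}|=\frac{\pi}{2}\sum_{n\ge 2}n^{2}(n^{2}-1)\,c_{n}^{2}\ =\ \frac{\pi}{2}\sum_{n\ge 3\text{ odd}}n^{2}(n^{2}-1)\,c_{n}^{2}.
\end{equation*}
For odd $n\ge 3$ one has $n^{2}(n^{2}-1)\ge 72$, with equality exactly at $n=3$ (indeed $9\cdot 8=72$), so
\begin{equation*}
|F_{e}|\ \ge\ 36\pi\sum_{n\ge 3\text{ odd}}c_{n}^{2}\ =\ 36\,\delta_{2}(K)^{2},
\end{equation*}
again with equality iff $c_{n}=0$ for every odd $n\ge 5$.

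Finally, to characterize equality in both inequalities one argues exactly as in the equality discussion of Theorem~\ref{bvb}: the condition $c_{n}=0$ for all odd $n\ge 5$ (combined with $c_{n}=0$ for even $n$ by constant width) forces
\begin{equation*}
p(\varphi)=a_{0}+a_{3}\cos 3\varphi+b_{3}\sin 3\varphi,
\end{equation*}
and a rotation by $\varphi_{0}/3$ with $\tan\varphi_{0}=b_{3}/a_{3}$ reduces this to $p(u)=a_{0}+a\cos 3u$, which by subsection~\ref{cicloid} defines either a disk (when $a=0$) or a curve parallel, at distance $a_{0}=L/2\pi$, to a Steiner curve. There is no real obstacle in this proof: all the analytic work has already been done in the previous sections, and the only (entirely routine) verification is the pair of elementary inequalities $(n^{2}-1)(n^{2}-4)\ge 40$ and $n^{2}(n^{2}-1)\ge 72$ for odd $n\ge 3$.
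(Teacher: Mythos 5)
Your proof is correct and takes essentially the same approach as the paper: term-by-term Parseval estimates on the odd Fourier coefficients, with equality forced at $n=3$. The only cosmetic difference is in the second inequality, where you bound $|F_{e}|=\tfrac{\pi}{2}\sum n^{2}(n^{2}-1)c_{n}^{2}$ in one step via $n^{2}(n^{2}-1)\ge 72$, while the paper splits $\pi|F_{e}|=(\pi|F_{e}|-\Delta)+\Delta$ and bounds the pieces by $20\pi\delta_{2}(K)^{2}$ and $16\pi\delta_{2}(K)^{2}$ respectively --- algebraically the identical estimate, since $(n^{2}-1)(n^{2}-4)+4(n^{2}-1)=n^{2}(n^{2}-1)$.
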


\begin{proof} When~$K$ has constant width by \eqref{w3} one has ${L^{2}}=\frac{4}{3}\int_{P\notin K}\sin^{3}\omega\,dP$ and the first inequality follows from Theorem \ref{teo7.3}.

Then we have
\begin{equation*}
\begin{split}
\pi |F_{e}|\geq  20\,\pi \,\delta_{2} (K)^{2}+\Delta &=20\,\pi \,\delta_{2} (K)^{2}+2\pi^{2}\sum_{n\geq 3}(n^{2}-1)c_{n}^{2}\\
&\geq 20\,\pi \,\delta_{2} (K)^{2}+16\pi^{2}\sum_{n\geq 3}c_{n}^{2}\\
&=36\pi \delta_{2}(K)^{2},
\end{split}
\end{equation*}
which gives the second inequality.

Equalities hold if and only $c_{n}=0$ for $n\geq 5$.
\end{proof}

\bibliographystyle{plain}

\end{document}